\documentclass[a4paper,11pt]{article}
\usepackage[utf8]{inputenc}
\usepackage[english]{babel}

\usepackage{natbib}
\usepackage{hyperref}
\usepackage{amsmath}
\usepackage{amssymb}
\usepackage{amsthm}
\usepackage{accents}
\usepackage{subcaption}
\usepackage{graphicx}
\usepackage{tikz}
\usepackage{xcolor}
\usepackage[linesnumbered]{algorithm2e}
\usepackage{todonotes}
\usepackage{mathtools}
\usepackage{multirow}
\usepackage{booktabs}
\usepackage{bbm}

\newtheorem{theorem}{Theorem}
\newtheorem{lemma}[theorem]{Lemma}
\newtheorem{corollary}[theorem]{Corollary}
\theoremstyle{definition}
\newtheorem{example}{Example}
\newtheorem{definition}{Definition}
\newtheorem{remark}{Remark}

\newcommand{\problemabbrv}{RSRP-PdM}
\newcommand{\graphname}{SEEG}
\newcommand{\completegraphname}{CEEG}

\newcommand{\reals}{\mathbb{R}}
\newcommand{\integer}{\mathbb{Z}}
\newcommand{\pos}[1]{{#1}_{\geq 0}}
\newcommand{\strictpos}[1]{#1_{> 0}}
\newcommand{\binary}{\{0,1\}}
\DeclareMathOperator*{\argmin}{arg\,min}

\DeclareMathOperator*{\erf}{erf}
\DeclareMathOperator*{\disjointUnion}{\Dot{\bigcup}}
\newcommand{\vars}{s}
\newcommand{\varsi}[1]{\vars_{#1}}
\newcommand{\vart}{t}
\newcommand{\varti}[1]{\vart_{#1}}
\newcommand{\cost}{c}
\newcommand{\varx}{x}

\newcommand{\vard}{d}

\newcommand{\variance}{\sigma^2}
\newcommand{\domain}{B}
\newcommand{\directions}{R}

\newcommand{\trips}{\mathcal{T}}
\newcommand{\trip}{t}
\newcommand{\vehicles}{\mathcal{V}}
\newcommand{\vehicle}{v}
\newcommand{\paramspace}{\Theta}
\newcommand{\param}{\theta}
\newcommand{\degradation}[1]{\Delta_{#1}}
\newcommand{\locations}{\mathcal{L}}
\newcommand{\location}{l}
\newcommand{\maintenances}{\locations_M}
\newcommand{\maintenance}{m}

\newcommand{\timept}{k}
\newcommand{\discr}{\mathcal{D}}

\newcommand{\nodes}{V}
\newcommand{\node}{v}
\newcommand{\floor}[1]{\lfloor {#1} \rfloor}
\newcommand{\floorto}[2]{\floor{#1}_{#2}}

\newcommand{\deptime}[1]{\timept_{#1}^d}
\newcommand{\arrtime}[1]{\timept_{#1}^a}
\newcommand{\deploc}[1]{\location_{#1}^d}
\newcommand{\arrloc}[1]{\location_{#1}^a}
\newcommand{\nvehicle}[1]{n_{#1}^v}
\newcommand{\arcs}{A}
\newcommand{\arc}{a}
\newcommand{\graph}{G}
\newcommand{\health}[2]{H_{#1,#2}}
\newcommand{\timepts}{\mathcal{K}}
\newcommand{\sbv}[1]{e_{#1}}
\newcommand{\ones}[1]{\mathbbm{1}_{#1}}
\newcommand{\artificial}{\Phi}
\newcommand{\prob}{\mathbb{P}}
\newcommand{\failureprob}{\prob_{f}}
\newcommand{\outgoing}{\delta^{+}}
\newcommand{\incoming}{\delta^{-}}
\newcommand{\normal}[2]{\mathcal{N}(#1, #2)}

\newcommand{\best}[1]{\textbf{#1}}

\newcommand{\setft}[2]{\{#1,\dots,#2\}}
\newcommand{\eucldist}[1]{\|{#1}\|_2}
\newcommand{\sgnleq}[1]{\mathop{\preceq}_{#1}}

\newcommand{\cube}[1]{\mathcal{E}_{#1}}
\newcommand{\deriv}[1]{\frac{\partial}{\partial{#1}}}
\newcommand{\conv}[1]{\mathit{conv}(#1)}

\newcommand{\emittingcone}[2]{\mathcal{C}_{#1}(#2)}
\newcommand{\paths}{\mathcal{P}}
\newcommand{\otherparam}{\varphi}
\newcommand{\gammadist}[2]{\Gamma(#1, #2)}

\newcommand{\identity}{\text{id}_{\reals}}
\newcommand{\error}{\varepsilon}
\newcommand{\orig}{\varphi}
\newcommand{\rounded}{\tau}
\newcommand{\abs}[1]{\left|{#1}\right|}

\newcommand{\powerset}[1]{2^{#1}}
\newcommand{\pois}{\mathit{Pois}}
\newcommand{\weibull}{\mathit{Weibull}}
\newcommand{\pdf}{\Pi}
\newcommand{\diffbar}[1]{C^1(#1)}
\definecolor{zibblue}{HTML}{03869F}

\DeclareRobustCommand{\orcidlink}[1]{%
  \href{https://orcid.org/#1}{\begingroup\normalfont%
    \raisebox{-\fontchardp\font`q}{%
      \includegraphics[height=\fontcharht\font`/+\fontchardp\font`q]{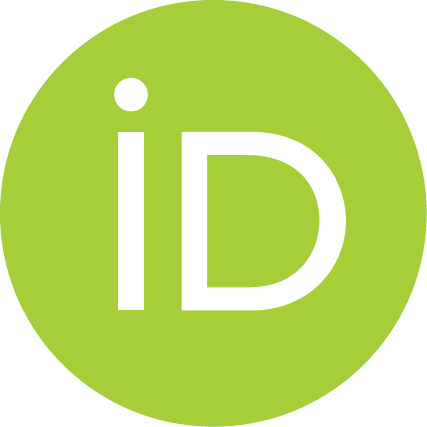}%
    }\endgroup%
    ~#1%
  }%
}
\newcommand{\orcid}[1]{\footnote{\orcidlink{#1}}}

\pgfdeclarelayer{bg} 
\pgfsetlayers{bg,main}

\title{An Iterative Refinement Approach for the \\Rolling Stock Rotation Problem with \\Predictive Maintenance}
\author{%
\begin{tabular}{c}
Felix Prause\orcid{0000-0001-9401-3707} \ and Ralf Bornd\"{o}rfer\orcid{0000-0001-7223-9174}\\
Zuse Institute Berlin\\
Takustr.~7, 14195 Berlin, Germany\\
\texttt{prause@zib.de}, \texttt{borndoerfer@zib.de}
\end{tabular}%
}
\date{\today}

\begin{document}

\maketitle

\begin{abstract}
The rolling stock rotation problem with predictive maintenance (\problemabbrv) involves the assignment of trips to a fleet of vehicles with integrated maintenance scheduling based on the predicted failure probability of the vehicles.
These probabilities are determined by the health states of the vehicles, which are considered to be random variables distributed by a parameterized family of probability distribution functions. During the operation of the trips, the corresponding parameters get updated.
In this article, we present a dual solution approach for \problemabbrv\ and generalize a linear programming based lower bound for this problem to families of probability distribution functions with more than one parameter.
For this purpose, we define a rounding function that allows for a consistent underestimation of the parameters and model the problem by a state-expanded event-graph in which the possible states are restricted to a discrete set. This induces a flow problem that is solved by an integer linear program.
We show that the iterative refinement of the underlying discretization leads to solutions that converge from below to an optimal solution of the original instance. Thus, the linear relaxation of the considered integer linear program results in a lower bound for \problemabbrv.
Finally, we report on the results of computational experiments conducted on a library of test instances.
\end{abstract}

\paragraph{Keywords:} Rolling Stock Rotation Planning, Iterative Refinement Approach, State-Expanded Event-Graph, Integer linear Programming, Lower Bound

\section{Introduction}
\label{sec:introduction}
The current advances in machine learning and the associated ability to efficiently analyze large volumes of data, combined with the inexpensive availability of sensors, enable the effective monitoring of the health conditions of mechanical components.
This allows predictive maintenance to be established as a regime for maintenance scheduling in the rail sector.
Predictive maintenance is one of the four maintenance systems that exist in the literature, alongside corrective, preventive, and condition-based maintenance.
Corrective and preventive maintenance are the two most obvious techniques, where a component is maintained whenever a fault occurs or when a certain time or distance mileage is reached.
Condition-based and predictive maintenance, on the other hand, rely on estimating the health state of the respective component. This is usually done by evaluating sensor measurements using statistical methods or machine learning techniques.
In condition-based maintenance, components are serviced when the estimated health state exceeds a predefined threshold, whereas in predictive maintenance, the remaining useful life (RUL) or future health states of the component are predicted and maintenance is scheduled just before the estimated occurrence of a fault.
For an overview of these different maintenance systems, we refer to \cite{wang2007selection}.
It is evident that predictive maintenance is the most economical and environmentally friendly of these regimes, see for example \cite{arena2021predictive}, since the components are maintained before failure and the remaining useful life is completely utilized so that no unnecessary replacement actions are carried out.

Unfortunately, health states can only be observed indirectly and must therefore be estimated from sensor measurements. Errors arising from these measurements and the subsequent estimation lead to the fact that the determined health states must be regarded as uncertain. This uncertainty is then further increased by projecting the states into the future under uncertain operating conditions, so that both the health states themselves and their predictions must be considered as random variables.
Thus, if we want to incorporate predictive maintenance into the planning of rolling stock rotations, we need to develop approaches that are capable of optimizing the rotations based on these uncertain circumstances, see \cite{bougacha2020contribution}.

\subsection{Related Work}
\label{subsec:relatedWork}
The rolling stock rotation problem (RSRP) with integrated maintenance scheduling is a topic that has already been investigated in the literature.
An overview can be found in \cite{reuther2018optimization}. The authors claim that there is no standard way to address maintenance constraints for RSRP and refer to \cite{reuther2017mathematical} for examples.
The considered maintenance regime is typically distant- or time-dependent preventive maintenance, and the problem is usually modeled by event-based graphs. These graphs are also called space-time graphs and give rise to solution approaches based on mixed-integer programs (MIP) that determine flows within these graphs. These MIP formulations are then solved using either branch-and-price or column generation, e.g., \cite{borndorfer2014coarse,giacco2014rolling,andres2015maintenance,borndorfer2016integrated,lusby2017branch,gao2022weekly}. Furthermore, also heuristics are used to solve RSRP, see for example \cite{cacchiani2010solving,thorlacius2015integrated}.
For a comparison of the various considered maintenance regimes, constraints, and solution approaches, we refer to \cite{prause2024approximating} which extends the overview of \cite{schlechte2023bouquet}.
In the considered preventive maintenance, service tasks are scheduled when a certain threshold for the traveled distance or the elapsed time is reached. Thus, the present degradation behavior is linear.
In all of these approaches, maintenance is incorporated into the model either through the usage of depot-to-depot paths or through the utilization of resource-constrained paths.

Next, we consider literature on the RSRP with predictive maintenance (\problemabbrv).
In \cite{herr2017predictive}, the maintenance of the vehicles is scheduled based on their health states, which represent the RUL and are considered to be point-estimates. Here, the maximal degradation level is capped by a threshold and a linear degradation behavior is assumed.
The trips are combined into predefined tours that are assigned to the vehicles and after which maintenance services can be carried out. They apply a MIP formulation to solve the problem, where the objective aims at maximizing the degradation of each vehicle before maintenance.
This approach is extended by \cite{herr2017joint} by supposing that the trips are no longer grouped into fixed tours, but it is assumed that the vehicles can be maintained at every station. Their aim is to maximize the minimum deterioration level of all vehicles before maintenance.
In \cite{wu2019train}, the vehicles have to be assigned to paths consisting of fixed tasks of the timetable. This article shows the economic impact of predictive maintenance on commuter trains in Taiwan. The authors give a MIP formulation that minimizes the expected failure costs of the vehicles and use a two-parameter Weibull distribution to model the reliability of the rolling stock.
The authors of \cite{rokhforoz2021hierarchical} assume the RUL to be a random variable, since point-estimates do not capture its uncertainty. They use normal distributions to depict this uncertainty and present a short-term approach that can be applied whenever a fault is detected during operation. In their approach, they use an exponential penalty function to enforce maintenance and employ a MIP formulation with bigM constraints. They also provide a literature overview on RUL estimation.
\cite{bougacha2022impact} aim to reinforce the problem stated by \cite{herr2017predictive}. For this purpose, they assume that the health states are uncertain and model the degradation as a gamma process. Their approach also schedules the vehicles' maintenance when the predicted health states exceed a given threshold, rather than based on the failure probability of the vehicles. They aim to optimize and study the duration of the decision horizon and propose a genetic algorithm and two heuristics for solving the problem.
In \cite{prause2024approximating}, the health states of the vehicles are considered as random variables and maintenance is scheduled based on the failure probability of the vehicles. The authors assume that these random variables are distributed by a one-parameter family of probability distribution functions (PDF). During the operation of the trips, the parameter corresponding to the health state of the vehicle in use is then updated. They present a heuristic and a lower bound for \problemabbrv\ based on a state-expanded event-graph (\graphname) in which the parameters of the health states are discretized.
Another heuristic using this graph model was presented in \cite{prause2023multi}.

In the following, we briefly review literature that utilizes graph models and algorithmic methods similar to those employed in this article.
A state-expanded version of a space-time graph modeling the assignment of duties to vehicles was used by \cite{van2017scheduling} and \cite{li2019mixed} for the scheduling of electric vehicles and their charging processes. Here, each node of the underlying space-time graph exists multiple times with different states of charge, and the energy consumption or the recharging is implicitly modeled by arcs that connect nodes with different states of charge. Thus, the energy consumption of the vehicles does not have to be considered during the solution process.
The same idea of resource-expanding the underlying graph was used by \cite{zhu2012three} to solve the resource-constrained shortest path problem.
An iterative refinement approach similar to the one used in this article was presented by \cite{boland2017continuous}. They propose a solution approach to the continuous-time service network design problem that relies on discretizing the time. The considered discretization is then iteratively refined, and the generated solutions provide increasingly accurate approximations to the original problem.
Their approach was subsequently applied to various time-dependent problems, see for example \cite{vu2020dynamic,he2022dynamic}.

\subsection{Contribution}
We consider \problemabbrv\ as defined in \cite{prause2024approximating}, where the health states of the vehicles are assumed to be uncertain and considered as random variables distributed by a parametric family of PDFs. This provides the possibility to schedule maintenance based on the failure probability of the vehicles instead of applying thresholds.
In addition, non-linear degradation functions are allowed, vehicles can be maintained only at specific maintenance locations, and the trips can be arbitrarily assigned, i.e., they are not \textit{a priori} grouped into tours.

In this article, we construct a rounding function that allows for consistent underestimation of the parameters during the construction of a \graphname\ for parameter spaces of finite dimension.
Furthermore, we introduce a family of discretizations with increasing granularity and specify the properties of the potentially eligible failure probability functions.
Using these notions, we show that the error between a vehicle rotation in the approximate problem induced by the \graphname\ and its corresponding rotation in the original \problemabbrv\ instance approaches zero for increasingly fine discretizations.
We then prove that the parameters describing the health states of the vehicles in the \graphname\ are actually underestimated compared to the parameters occurring in the original scenario when the \graphname\ is generated using the constructed rounding function. Thus, the costs of the rotations are also underestimated and the objective value of an optimal solution for the approximate problem provides a lower bound for the value of an optimal solution for \problemabbrv.
This generalizes the lower bound given in \cite{prause2024approximating} to parametric families of PDFs with more than one parameter.
Moreover, an iterative refinement of the underlying discretization of the \graphname\ leads to a sequence of solutions with increasing quality, all of which underestimate the value of an optimal solution for \problemabbrv, resulting in a dual solution approach.
Finally, the presented approaches are evaluated using a set of test instances for \problemabbrv\ given in \cite{prause2023construction}.

\subsection{Outline}
This article is structured as follows:\
First, we describe the \problemabbrv\ in Section~\ref{sec:problem}.
Then, in Section~\ref{sec:graph}, we present the \graphname, which is a state-expanded version of an event-graph.
This graph induces an approximate problem that can be solved using the ILP formulation described in Section~\ref{sec:inducedProblem}.
We then discuss in Section~\ref{sec:degradation} different approaches to model the degradation and how these models can be included in the construction of the \graphname.
Next, we construct a rounding function, which is necessary to consistently underestimate the parameters when constructing the \graphname\ in Section~\ref{sec:consistentlyRounding}.
In Section~\ref{sec:approximating}, we then utilize this function to prove that the solutions of the induced approximate problem give a lower bound for the objective of \problemabbrv.
Finally, we conduct computational experiments on a test library for \problemabbrv\ given by \cite{prause2023construction} in Section~\ref{sec:results} and close with a conclusion in Section~\ref{sec:conclusion}.

\section{Problem Formulation}
\label{sec:problem}
In this section, we recall \problemabbrv\ as described in \cite{prause2024approximating}.
Suppose we are given a set of vehicles $\vehicles$ with individual health states.
We consider the health states of the vehicles $\vehicle\in\vehicles$ at each time point $\timept\in\timepts$ as random variables $\health{\vehicle}{\timept}$ to account for their uncertainty. Here, $\timepts$ is the time horizon consisting of a finite number of time steps.
The health states are assumed to be distributed by a parametric family of probability distribution functions (PDFs) $\pdf=\{\pdf_{\param}\mid\param\in\paramspace\}$ with parameter space $\paramspace\subset\reals^n$. Examples of such parametric families are the family of normal distributions\linebreak $\mathcal{N}=\{\normal{\mu}{\variance}\mid(\mu,\variance)\in\reals\times\strictpos{\reals}\}$, the family of Poisson distributions $\pois=\{\pois(\lambda)\mid\lambda\in\strictpos{\reals}\}$, the family of gamma distributions\linebreak $\Gamma=\{\gammadist{\kappa}{\lambda}\mid (\kappa,\lambda)\in\strictpos{\reals}^2\}$, or the family of Weibull distributions\linebreak $\weibull=\{\weibull(\kappa,\lambda)\mid (\kappa,\lambda)\in\strictpos{\reals}^2\}$. Each PDF of the family is characterized by its parameters $\param\in\paramspace$, and each vehicle has an initial health value $\health{\vehicle}{0}$, which is also distributed by a PDF of the considered family, i.e., can be described by its parameter $\param_{\vehicle,0}\in\paramspace$.

Next, we consider a timetable $\trips$ consisting of trips that need to be operated. Each $\trip\in\trips$ has a departure and an arrival time $\deptime{\trip},\arrtime{\trip}\in\timepts$, and a departure and an arrival location $\deploc{\trip},\arrloc{\trip}\in\locations$, where $\locations$ is the set of locations and $\maintenances\subset\locations$ represents the maintenance facilities. In each of these workshops $\maintenance\in\maintenances$, maintenance services can be carried out to replenish the health state of the maintained vehicle and reset its associated parameters to $\param_{\maintenance}\in\paramspace$. These service actions take $\timept_{M}\in\pos{\reals}$ time.
In addition, each trip possesses a degradation function $\degradation{\trip}:\paramspace\rightarrow\paramspace$. This function determines how the parameters of the vehicles' health state alter when $\trip$ is operated.
For example, if we consider a vehicle $\vehicle$ with current health state $\health{\vehicle}{\timept}\sim \pdf_{\param_{\vehicle,\timept}}$ and let this vehicle operate trip $\trip\in\trips$, we apply $\degradation{\trip}$ to the parameters that characterize $\health{\vehicle}{\timept}$ and obtain $\param_{\vehicle,\timept+1} = \degradation{\trip}(\param_{\vehicle,\timept})$. This parameter then describes the PDF of the new health state, i.e., $\health{\vehicle}{\timept+1}\sim \pdf_{\param_{\vehicle,\timept+1}}$, at the following time point.
Assumptions regarding these functions and various degradation models are discussed in Section~\ref{sec:degradation}.
Note that we assume that there are also degradation functions associated with deadhead trips or vehicle stoppage.
Moreover, the considered family of PDFs yields the parameterized failure probability of the vehicles, denoted by $\failureprob$, which determines their potential failure costs. Examples of $\failureprob$ are presented in Section~\ref{sec:degradation} and its properties are discussed in Sections~\ref{sec:consistentlyRounding} and \ref{sec:approximating}.
Finally, $\nvehicle{\trip}\in\strictpos{\integer}$ specifies the number of vehicles required to operate $\trip$.

The task of \problemabbrv\ is then to determine an assignment of trips, deadhead trips, stoppages, and maintenance services to each of the vehicles such that all trips are operated. In addition, we associate costs with all of the aforementioned operations as well as with the breakdown of vehicles during operation and aim for an assignment with minimal total costs, including potential failure costs of the vehicles. Finally, we require that the resulting vehicle rotations are balanced, i.e., the numbers of vehicles located at each destination at the beginning and at the end of the time horizon have to coincide.

\section{The State-Expanded Event-Graph}
\label{sec:graph}
To approximate \problemabbrv, we recall the construction of the state-\linebreak expanded event-graph (\graphname) introduced in \cite{prause2024approximating}. This graph induces an approximate problem, which can be formulated and solved by an ILP. Thus, if all parameters of the original problem are consistently underestimated during the construction of the graph, the LP relaxation of the ILP provides a lower bound for the original instance.
As mentioned in Section~\ref{subsec:relatedWork}, similar graphs were used by \cite{van2017scheduling} and \cite{li2019mixed} for modeling the state of charge of a fleet of electric vehicles, and by \cite{zhu2012three} for solving the resource-constrained shortest path problem.
Note that the arc reduction procedures used in \cite{kliewer2006time} are also applied in the construction of the \graphname.

\subsection{Constructing the Nodes}
\label{subsec:nodeConstruction}
First, we describe the generation procedure of the nodes of the \graphname. Their construction is based on the events of the respective trips of the timetable, i.e., the times and locations of their departures and arrivals, as well as a discretization $\discr$ of the parameter space $\paramspace$.
\begin{example}
    Assume that the random variables representing the health states are normally distributed and take values in $[0,1]$. Then, a possible discretization of the corresponding parameter space $\paramspace = \{(\mu,\variance)\in\reals\times\strictpos{\reals}\}$ could be
    \begin{equation*}
        \discr = \big\{(\mu,\variance)\in\reals^2\mid\mu\in\{0.0,0.1,\dots,1.0\},\variance\in\{0.05,0.1,\dots,0.95\}\big\}.
    \end{equation*}
\end{example}

To generate the nodes of the \graphname, we iterate over the individual trips $\trip\in\trips$ and consider their corresponding departure and arrival events.
For each of these events and each parameter of the discretization $\discr$, we construct a node. This results in the set of departure nodes $\nodes_{+}\coloneqq\{(\deploc{\trip},\deptime{\trip},\param)\mid\linebreak\trip\in\trips,\param\in\discr\}$ and the set of arrival nodes $\nodes_{-}\coloneqq\{(\arrloc{\trip},\arrtime{\trip},\param)\mid\trip\in\trips,\param\in\discr\}$.
In addition, we add a start and an end node for each location and each parameter of the discretization, i.e., $\nodes_{0}\coloneqq\{(\location,\timept,\param)\mid\location\in\locations,\timept\in\{0,\infty\},\param\in\discr\}$.
Furthermore, we define the set of nodes corresponding to a certain location $\location\in\locations$ and a specific time $\timept\in\timepts$ as $\nodes(\location,\timept)\coloneqq\{(\location,\timept,\param)\in\nodes\mid\param\in\discr\}$.

\subsection{Constructing the Arcs}
\label{subsec:arcConstruction}
Next, we describe how to construct the arcs of the \graphname. Note that all arcs are directed and correspond to the possible operations of the vehicles. They can therefore alter the parameters that characterize the PDF of the health state of the vehicle in use. Here, the parameters of the tail characterize the PDF of the health state before the operation of the service corresponding to the considered arc, while the parameters of its head represent the updated values.
However, the exact value that results when the degradation function of a service is applied to the parameter of its tail node is not necessarily an element of $\discr$. Therefore, it may happen that no head node exists for the arc of this service.
To circumvent this problem, we will utilize a rounding function $\floorto{\cdot}{\discr}:\paramspace\rightarrow\discr$ as constructed in Section~\ref{sec:consistentlyRounding}, which maps arbitrary parameters of $\paramspace$ to elements of $\discr$.
\newpage

When generating the arcs, the employed procedure differs depending on the type of service to be represented:\
First, we construct the trip arcs. Let therefore $\trip\in\trips$ be an arbitrary trip. Then, we iterate over all corresponding departure nodes, i.e., nodes in $\nodes(\deploc{\trip},\deptime{\trip})$, and apply the degradation function of the trip, i.e., $\degradation{\trip}$, to their parameter values.
Let $\node_{1} = (\deploc{\trip},\deptime{\trip},\param_{1})$ be one of these departure nodes, then the parameter value after the operation of $\trip$ is $\param_{2} = \degradation{\trip}(\param_{1})$. However, $\param_{2}$ does not have to be contained in $\discr$. We therefore map it to a value in $\discr$ by applying the rounding function mentioned above. This results in $\param_{3} = \floorto{\param_{2}}{\discr}$, and an arc is added between $\node_{1}$ and $\node_{3}=(\arrloc{\trip},\arrtime{\trip},\param_{3})$.
If we repeat this procedure for each $\trip\in\trips$, we obtain the set of trip arcs $\arcs_{\trips}$.
Note that for each $\node\in\nodes(\deploc{\trip},\deptime{\trip})$ there is an outgoing arc corresponding to $\trip$, so we obtain multiple arcs representing $\trip$, which we denote by $\arcs(\trip)$.

Next, we add waiting arcs representing the standstill of the vehicles at their current location. For this purpose, we sort the nodes at each location $\location\in\locations$ in chronologically ascending order and connect time-consecutive nodes with an arc. This creates a timeline for each location. Again, we apply the degradation function associated with waiting to the parameters of the tail node and then determine the parameters of the head node by using $\floorto{\cdot}{\discr}$.
The degradation function used here can depend on the waiting time or simply be the identity if we assume that no wear occurs during the standstill.
Repeating this process for all locations yields the set of waiting arcs $\arcs_{W}$.

Now, we add arcs that represent possible deadhead trips between the different locations.
Suppose we are given a node $\node_1=(\location_1,\timept_1,\param_1)\in\nodes$, then we define the function \texttt{firstAfter($\node_1,\location_2$)} to determine the first departure node at $\location_2$ that can be reached by a deadhead trip from $\location_1$ at time $\timept_1$. Analogously, we define \texttt{lastBefore($\node_2,\location_1$)} to return the chronologically last arrival node at $\location_1$ from which it is possible to reach $\node_2$ by a deadhead trip.
To obtain all deadhead arcs, we now iterate over all arrival nodes $\node_1=(\location_1,\timept_1,\param_1)\in\nodes_{-}$ and determine $\node_2=\texttt{firstAfter(}\node_1,\location_2\texttt{)}$ for all $\location_2\in\locations\setminus\{\location_1\}$. If $\node_1=\texttt{lastBefore(}\node_2,\location_1\texttt{)}$ holds, we add an arc from $\node_1$ to $\node_2$, otherwise we omit the addition of the arc. This generates the set of deadhead arcs $\arcs_D$.
In this process, the composition of the rounding function with the degradation function associated with deadhead trips is again applied to determine the parameters of the head nodes of the arcs. The degradation function can be time- or distance-dependent, or the identity function if it is assumed that no degradation occurs while deadheading.
Note that the utilization of \texttt{firstAfter} and \texttt{lastBefore} reduces the number of deadhead arcs contained in the \graphname\ and was introduced in \cite{kliewer2006time}.

Then, we add maintenance arcs. These arcs are used to restore the health states by resetting the parameters to predefined values that are as good as new.
Let $\param_M\in\paramspace$ be the parameter values after maintenance, then we define $\param_m\coloneqq\floorto{\param_M}{\discr}\in\discr$ and iterate over all events $(\location,\timept)\in\locations\times\timepts$ and maintenance locations $\maintenance\in\maintenances$. For each of these combinations, we add a new node $\node_{\maintenance} = (\maintenance,\timept + \timept_{\location,\maintenance} + \timept_{M},\param_\maintenance)$ representing the corresponding maintenance service. Here, $\timept_{\location,\maintenance}$ is the travel time from $\location$ to $\maintenance$ and $\timept_{M}$ is the duration of the service. We then add an arc from all nodes of $\nodes(\location,\timept)$ to the just created maintenance node $\node_{\maintenance}$.
Finally, we add the outgoing deadhead and waiting arcs of $\node_{\maintenance}$ as described above.
These procedures generate the set of maintenance nodes $\nodes_M$ and the set of maintenance arcs $\arcs_M$.

Lastly, we add artificial nodes and arcs that serve to integrate the initial health of the vehicles and their origin into the graph and that are later used to guarantee the balancedness of the vehicle rotations.
For this purpose, we add a node $\node_{\location}$ for any origin where a vehicle is initially located. Then, for each of these vehicles, we add an arc from $\node_{\location}$ to $(\location,0,\floorto{\param_{\vehicle,0}}{\discr})$, where $\param_{\vehicle,0}$ is the parameter value of the initial health state of $\vehicle$. This results in the set of artificial starting arcs $\arcs_{\artificial}^{0}$.
Similarly, we add a node for each location $\location\in\locations$ symbolizing the end of the time horizon and add an arc from each node of $\nodes(\location,\infty)$ to it. We denote these artificial end arcs by $\arcs_{\artificial}^{\infty}$ and thus obtain the artificial node set $\nodes_{\artificial}$ and the artificial arc set $\arcs_{\artificial}\coloneqq\arcs_{\artificial}^{0}\cup\arcs_{\artificial}^{\infty}$.

\subsection{The Resulting Graph}
Using the node and arcs sets constructed in the previous Sections~\ref{subsec:nodeConstruction} and \ref{subsec:arcConstruction}, we define $\nodes\coloneqq\nodes_{+}\cup\nodes_{-}\cup\nodes_{0}\cup\nodes_{M}\cup\nodes_{\artificial}$ and $\arcs\coloneqq\arcs_{\trips}\cup\arcs_{W}\cup\arcs_{D}\cup\arcs_{M}\cup\arcs_{\artificial}$, which together form the \graphname\ $\graph=(\nodes,\arcs)$.
Note that it is admissible to repeatedly remove all nodes of $\nodes\setminus\nodes_{\artificial}$, together with their incident arcs, that have either in- or outdegree equal to zero.
In addition, $\graph$ is always dependent on the choice of the discretization $\discr$ and the rounding function applied during the construction process.

We would like to emphasize that non-linear degradation functions can be used in the construction of the \graphname, which offers the possibility to consider and incorporate a wider variety of degradation behaviors. This is an advantage over classical space-time graph models, where it is complicated to deal with non-linear resource consumption.
Additionally, the \graphname\ can easily be adapted for distance- or time-based preventive maintenance scenarios. Here, the parameter values are chosen to represent the distance traveled or the elapsed time since the last maintenance. Then, the bounds of $\paramspace$ must to be set to match the thresholds of the maximum allowed mileage and the degradation functions are set to accumulate the distance or time associated with the different operations.
Possible extensions of the presented model and how these can be implemented is discussed in \cite{prause2024approximating}.

\subsection{Arc Costs}
Now, we describe the costs of the arcs of the \graphname. These costs depend on the type of the considered arc.
Waiting arcs ($\arcs_{W}$) are assigned costs of zero, while the costs of the deadhead arcs ($\arcs_{D}$) depend on the traveled distance between their start and end locations.
The costs of the artificial arcs ($\arcs_{\artificial}$) are determined in the same way as the costs of the deadhead trips, but the operation costs of a vehicle for the considered time horizon are additionally added to each of the starting arcs ($\arcs_{\artificial}^{0}$).
The two types of maintenance arcs ($\arcs_{M}$), i.e., the arcs heading to a maintenance node and the arcs originating from it, are allocated different costs. The incoming ones are assigned the sum of the maintenance costs and the deadhead costs between the start and the end location of the arc, while the outgoing arcs are only attributed the deadhead costs from the maintenance facility to the arrival location.

The trip costs ($\arcs_{\trips}$) also consist of two different sub-costs. First, there are the costs that are inherent to every trip and which can be time- or distance-dependent. Second, we assign them the expected failure costs of the vehicles, i.e., the product of the failure costs and the probability that a vehicle breaks down.
Various approaches to model degradation and their corresponding failure probability functions ($\failureprob$) are discussed in Section~\ref{sec:degradation}.
These functions are then applied to the parameters characterizing the health state of the vehicle during and after the operation of a trip, i.e., to the parameter values of the arc's head node.
This can be exemplified as follows:\ A trip arc\linebreak $\arc = (\node_1,\node_2)$ connecting $\node_1 = (\location_1,\timept_1,\param_1)$ and $\node_2 = (\location_2,\timept_2,\param_2)$ corresponds to a vehicle whose original health state is a random variable distributed by PDF $\pdf_{\param_1}$, which is changed to a health state that is a random variable distributed by $\Pi_{\param_2}$ by performing the associated trip.
Thus, the parameter values of the head node of a trip characterize the failure probability of a vehicle during the trip. The expected failure costs can therefore be determined by multiplying the costs associated with a failure with $\failureprob(\param_2)$.
Note that the \graphname\ has the advantage of determining the failure costs of the trips during its construction and thus the failure probability function does not need to be evaluated in the objective, as in the case of non-state-expanded MIP formulations.

\subsection{The Completely Expanded Event-Graph}
Next, we describe a special case of the \graphname\ that not only approximates but exactly models \problemabbrv.
For this purpose, we will construct a discretization $\discr$ that contains all possibly occurring parameters.
First, we collect all parameter values corresponding to the initial health states of the vehicles in $\discr$. Then, we sort the trips in ascending order by their departure times, iterate over all of them, and apply each degradation functions to all values currently contained in $\discr$. The resulting values are in turn added to $\discr$ and the procedure continues with the next trip.
Note that $\abs{\discr}$ may be exponential in the input size of \problemabbrv. If we now apply the identity $\identity$ as rounding function, we obtain $\floorto{\param}{\discr} = \identity(\param) = \param\in\discr$, since $\discr$ contains every possible parameter value that could occur. Therefore, the paths in the \graphname\ corresponding to the vehicle rotations have the same parameter values as they would have in the continuous case. Thus, a solution to the flow problem induced by this graph is an optimal solution for \problemabbrv.
We refer to this graph as completely expanded event-graph (\completegraphname). Note that this graph is only of theoretical importance and used for comparisons with the \graphname\ and not for computations.

\section{The Induced Approximate Flow Problem}
\label{sec:inducedProblem}
Suppose we consider an instance of \problemabbrv, and we constructed the \graphname\ $\graph$ based on some discretization $\discr$, then we need to translate the task of the original problem into a problem in $\graph$.
Since paths in $\graph$ that start at one of the artificial start nodes and head to one of the artificial end nodes correspond to possible assignments of tasks to a vehicle, we need to determine a set of paths with the following properties:\
First, each trip $\trip\in\trips$ must be operated, i.e., exactly $\nvehicle{\trip}$ arcs out of the multiple ones corresponding to $\trip$ have to be selected.
Next, each artificial arc representing the usage of a vehicle, i.e., arcs in $\arcs_{\artificial}^{0}$, can be contained in at most one path.
Finally, to ensure the balancedness, the number of paths leaving a location $\location\in\locations$ must be equal to the number of paths heading to $\location$.

Note that optimal solutions to this problem are only approximations to the original \problemabbrv, since the parameters are rounded during the construction of the \graphname. Thus, an approximation error is introduced to each of the parameters and the associated failure probability of the vehicles is not exactly determined, making the costs of the trip arcs only approximations.
However, with finer approximations, i.e., more granular choices of $\discr$, this error approaches zero, as we show in Section~\ref{sec:approximating}.

\subsection{Complexity of the Approximate Problem}
After describing the approximate problem, we give a mathematical formulation for it and prove that it is NP-hard.
\begin{definition}[EPCP]
    \label{def:epcp}
    Given a directed graph $\graph=(\nodes,\arcs)$ with source and sink nodes $\{\varsi{1},\dots,\varsi{L},\varti{1},\dots,\varti{L}\}\subset\nodes$ and an index set $I$. For each $i\in I$, let $\arcs_i\subseteq\arcs$ be a subset of arcs and let $c:\arcs\rightarrow\pos{\reals}$ be a cost function associated with the arcs of $\graph$. Then the task of the \emph{exact path cover problem (EPCP)} is to find a set of paths $\paths$ with minimum costs, each of which starts at some source node $\varsi{k}$ and heads to some sink node $\varti{l}$, and which together cover exactly one arc of each set $\arcs_i$, $\forall i\in I$.
    In addition, $|\outgoing(\varsi{l})|=|\incoming(\varti{l})|$ must hold for all $l\in\{1,\dots,L\}$.
\end{definition}
The solution of the EPCP for a given \graphname\ yields a solution to the induced approximate problem. Here, trips with $\nvehicle{\trip}>1$ must be modeled by $\nvehicle{\trip}$ parallel trips.
After defining EPCP, let us recall \emph{exact cover}, a problem stated by \cite{karp1972reducibility}, which was proven to be NP-complete by \cite{garey1979computers}:\
Given a set of elements $X$ and collection $C$ of subsets of $X$, i.e., $C\subseteq\powerset{X}$. Then the task of \emph{exact cover} is to find a subcollection $S\subseteq C$ whose disjoint union is $X$, i.e.,
\begin{equation*}
    \disjointUnion\limits_{Y\in S} Y = X.
\end{equation*}

If we now have an additional weight function $w:C\rightarrow \pos{\reals}$ that assigns a weight to each subset $Y\in C$, we obtain the \emph{weighted exact cover} problem, where the task is to find an exact cover with minimum total weight.
This weighted variant is the optimization version of exact cover and therefore NP-hard.

\begin{figure}
    \centering
    \resizebox{0.5\linewidth}{!}{
\begin{tikzpicture}
\tikzset{circ/.style={black,draw,circle,fill=zibblue,inner sep=0.5mm}}
		\node[circ] (0) at (0, 0) {$s$};
		\node[circ] (1) at (-2.5, 1) {$v_1^i$};
		\node[circ] (2) at (-1.5, 1) {$v_2^i$};
		\node[circ] (3) at (-0.5, 1) {$v_3^i$};
		\node[circ] (4) at (0.5, 1) {$v_4^i$};
		\node[circ] (5) at (1.5, 1) {$v_5^i$};
		\node[circ] (6) at (2.5, 1) {$v_6^i$};
		\node[circ] (7) at (-2.5, 5) {$v_1^f$};
		\node[circ] (8) at (-1.5, 5) {$v_2^f$};
		\node[circ] (9) at (-0.5, 5) {$v_3^f$};
		\node[circ] (10) at (0.5, 5) {$v_4^f$};
		\node[circ] (11) at (1.5, 5) {$v_5^f$};
		\node[circ] (12) at (2.5, 5) {$v_6^f$};
		\node[circ] (14) at (0, 6) {$t$};
		\node[circ] (15) at (-2.5, 2.33) {};
		\node[circ] (16) at (-2.5, 3.67) {};
		\node[circ] (17) at (-1.5, 3) {};
		\node[circ] (18) at (-0.5, 2.33) {};
		\node[circ] (19) at (-0.5, 3.67) {};
		\node[circ] (20) at (0.5, 2.33) {};
		\node[circ] (21) at (0.5, 3.67) {};
		\node[circ] (22) at (1.5, 2) {};
		\node[circ] (23) at (1.5, 3) {};
		\node[circ] (24) at (1.5, 4) {};
		\node[circ] (25) at (2.5, 3) {};
        \node[black] (a1) at (-2.3, 1.75) {$a$};
        \node[black] (d1) at (-2.3, 3) {$d$};
        \node[black] (g1) at (-2.3, 4.15) {$g$};
        \node[black] (a2) at (-1.3, 2.1) {$a$};
        \node[black] (d2) at (-1.3, 3.8) {$d$};
        \node[black] (d3) at (-0.3, 1.75) {$d$};
        \node[black] (e3) at (-0.3, 3) {$e$};
        \node[black] (g3) at (-0.3, 4.15) {$g$};
        \node[black] (c4) at (0.7, 1.75) {$c$};
        \node[black] (e4) at (0.7, 3) {$e$};
        \node[black] (f4) at (0.7, 4.15) {$f$};
        \node[black] (b5) at (1.7, 1.6) {$b$};
        \node[black] (c5) at (1.7, 2.45) {$c$};
        \node[black] (f5) at (1.7, 3.45) {$f$};
        \node[black] (g5) at (1.7, 4.3) {$g$};
        \node[black] (b6) at (2.7, 2.1) {$b$};
        \node[black] (g6) at (2.7, 3.8) {$g$};
		\draw[->,black] (0) to (1);
		\draw[->,black] (0) to (2);
		\draw[->,black] (0) to (3);
		\draw[->,black] (0) to (4);
		\draw[->,black] (0) to (5);
		\draw[->,black] (0) to (6);
		\draw[->,black] (1) to (15);
		\draw[->,black] (15) to (16);
		\draw[->,black] (16) to (7);
		\draw[->,black] (7) to (14);
		\draw[->,black] (2) to (17);
		\draw[->,black] (17) to (8);
		\draw[->,black] (8) to (14);
		\draw[->,black] (3) to (18);
		\draw[->,black] (18) to (19);
		\draw[->,black] (19) to (9);
		\draw[->,black] (9) to (14);
		\draw[->,black] (4) to (20);
		\draw[->,black] (20) to (21);
		\draw[->,black] (21) to (10);
		\draw[->,black] (10) to (14);
		\draw[->,black] (5) to (22);
		\draw[->,black] (22) to (23);
		\draw[->,black] (23) to (24);
		\draw[->,black] (24) to (11);
		\draw[->,black] (11) to (14);
		\draw[->,black] (6) to (25);
		\draw[->,black] (25) to (12);
		\draw[->,black] (12) to (14);
\end{tikzpicture}%
}
    \caption{Graphical representation of an exact cover instance as EPCP. The set of elements is $X=\{a,b,c,d,e,f,g\}$ and the collection of possible subsets is $C=\big\{\{a,d,g\},\{a,d\},\{d,e,g\},\{c,e,f\},\{b,c,f,g\},\{b,g\}\big\}$.}
    \label{fig:reductionGraphExample}
\end{figure}
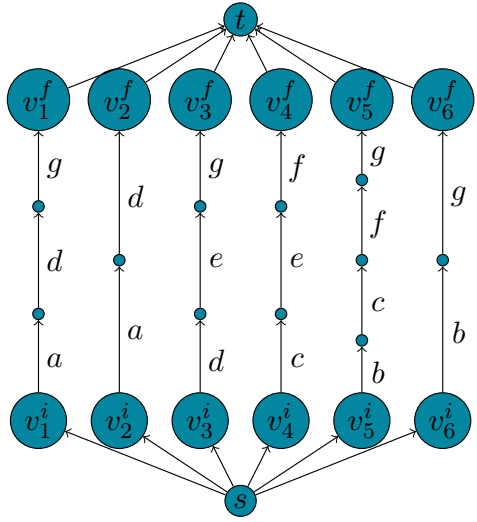

\begin{theorem}
    \label{thm:nphardness}
    EPCP is NP-hard.
\end{theorem}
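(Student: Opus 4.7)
The plan is to establish NP-hardness by a polynomial-time reduction from weighted exact cover, which is NP-hard as noted in the text just above the theorem statement. The construction is already hinted at by Figure~\ref{fig:reductionGraphExample}, and my proof would make that picture formal.

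Given an instance $(X, C, w)$ of weighted exact cover, I would build an EPCP instance with a single source $\vars=\varsi{1}$ and a single sink $\vart=\varti{1}$, so that $L=1$ and the balancedness condition $\abs{\outgoing(\vars)}=\abs{\incoming(\vart)}$ can be read off the construction directly (both equal $\abs{C}$). For each subset $Y_i \in C$, I would create a ``gadget path'' consisting of internal nodes $\node_i^i, \node_i^f$ together with an ordered sequence of arcs whose labels correspond to the elements of $Y_i$; concretely, if $Y_i=\{x_{i_1},\dots,x_{i_k}\}$ I introduce $k-1$ intermediate nodes and $k$ arcs, tagging the $j$-th arc with $x_{i_j}$. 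I then add a head arc from $\vars$ to $\node_i^i$ and a tail arc from $\node_i^f$ to $\vart$. The index set of the EPCP is $I = X$, and for each element $x \in X$ I define $\arcs_x$ to be the collection of all arcs across the gadgets that are tagged by $x$. Finally, I assign the cost $w(Y_i)$ to the head arc $(\vars,\node_i^i)$ and cost $0$ to every other arc. This construction is clearly polynomial in $\abs{X}+\abs{C}$.

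The correspondence between feasible solutions is then straightforward. A feasible $\vars$-$\vart$ path in the constructed graph must traverse exactly one gadget end-to-end (the only way from $\vars$ to $\vart$), so any feasible $\paths$ for EPCP selects a subcollection $S = \{Y_i : \text{gadget } i \text{ is traversed}\} \subseteq C$. The EPCP covering constraint ``exactly one arc of $\arcs_x$ is used, for each $x\in X$'' translates precisely to the requirement that every $x$ is contained in exactly one selected $Y_i$, i.e.\ $S$ is an exact cover of $X$. Conversely, any exact cover $S$ induces a family of gadget paths with total cost $\sum_{Y_i\in S}w(Y_i)$. Hence the optimal EPCP cost equals the optimal weighted exact cover value, and any polynomial-time algorithm for EPCP would solve weighted exact cover in polynomial time, which establishes NP-hardness.

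The main points to verify carefully are (i) that no ``cheating'' paths exist that bypass the gadget structure (this is immediate because the only arcs leaving $\vars$ go into gadget-initial nodes and the only arcs into $\vart$ come from gadget-final nodes, and within a gadget the arcs form a simple directed chain), and (ii) that the cost accounting matches exactly (which follows since non-head arcs have cost zero, so the cost of a gadget-path equals $w(Y_i)$). The balancedness side condition is automatically satisfied for $L=1$ by construction, so it poses no obstacle. I expect the only mildly delicate bookkeeping to be the precise listing of intermediate nodes within each gadget; the rest is essentially a direct translation of the picture in Figure~\ref{fig:reductionGraphExample}.
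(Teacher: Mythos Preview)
Your proposal is correct and essentially identical to the paper's proof: both reduce from weighted exact cover via a single source/sink and one chain-gadget per subset $Y_i$, with $\arcs_x$ collecting the arcs tagged by element $x$. The only immaterial difference is that the paper places the weight $w(Y_i)$ on the final arc $(\node_i^f,\vart)$ rather than on the initial arc $(\vars,\node_i^i)$, which does not affect the argument.
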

\begin{proof}
    To prove the NP-hardness of EPCP, we give a reduction of weighted exact cover to it.
    Suppose we are given an instance of weighted exact cover with elements $X$, collection $C\subseteq\powerset{X}$, and weight function $w$ over $C$.
    We will now set up the graph $\graph=(\nodes,\arcs)$ of the associated EPCP instance. An illustration of the graph resulting from the constructed described below is depicted in Figure~\ref{fig:reductionGraphExample}.
    First, we introduce a source node $s$ and a sink node $t$ and set $\nodes\coloneqq\{s,t\}$.
    Next, we iterate over each subset $Y_k\in C$ and add an initial node $\node_{k,i}$ and a final node $\node_{k,f}$ to $\nodes$. Between these nodes, we insert chains of arcs corresponding to the elements of $Y_k$, where the first arc has $\node_{k,i}$ as its tail, while the last arc has $\node_{k,f}$ as its head. For each $e\in X$, all arcs corresponding to $e$ are collected in set $\arcs_e$, and we add each generated arc to $\arcs$.
    Then, we then add an arc from $\vars$ to each of the initial nodes $\node_{k,i}$ and an arc from each of the final nodes $\node_{k,f}$ to $\vart$.
    We set the costs of all arcs to zero, except for the costs of the arcs between $\node_{k,f}$ and $\vart$, which are assigned the weight of the corresponding set $Y_k$.
    Notice that the balancedness of the paths, which is explicitly required in Definition~\ref{def:epcp}, is trivially satisfied since we consider only one source and one sink node.
    A solution for EPCP for the just generated graph $\graph=(\nodes,\arcs)$ and w.r.t.\ the arc sets $A_e$ then provides a solution for weighted exact cover.
\end{proof}

Note that even if EPCP could be solved in polynomial time, the underlying graph, i.e., the \completegraphname, which represents the original \problemabbrv\ instance, would have an exponential size.

\subsection{Solving the Approximate Problem}
\label{subsec:solving}
After introducing the approximate problem induced by the \graphname, we reproduce the following ILP formulation (\hyperref[eq:objective]{AP}) to solve this problem.
The presented formulation was given in \cite{prause2024approximating}, where it was applied to approximate \problemabbrv\ with one-dimensional parameter space. But it can also be directly employed for the multi-dimensional case if the \graphname\ is constructed accordingly.

In the ILP, $\cost_\arc\in\reals$ are the costs of the arcs and $\varx_\arc\in\pos{\integer}$ are variables that determine how many vehicles traverse arc $\arc\in\arcs$.
$\outgoing(\location_0)$ and $\incoming(\location_\infty)$, for $\location\in\locations$, are the outgoing arcs of the artificial start node at $\location$ and the incoming arcs of the artificial end node at $\location$, respectively.
In addition, $\nodes_{\artificial}$, $\arcs_{\artificial}$, and $\arcs_{\artificial}^0$ are as defined in Section~\ref{sec:graph}.

\begingroup
\begin{alignat}{8}
\text{(AP)} \quad && \min && \sum\limits_{\arc\in\arcs} \cost_\arc \varx_\arc \label{eq:objective} \\
&& \text{s.t.} && \sum\limits_{\arc\in\arcs(\trip)} \varx_\arc &= \nvehicle{\trip} && \forall \trip \in \trips \label{eq:coverage} \\
&&&& \sum\limits_{\arc\in\incoming(\node)} \varx_\arc &= \sum\limits_{\arc\in\outgoing(\node)} \varx_\arc && \forall \node \in \nodes\setminus\nodes_{\artificial} \label{eq:flow_conservation} \\
&&&& \sum\limits_{\arc\in\outgoing(\location_{0})} \varx_\arc &= \sum\limits_{\arc\in\incoming(\location_{\infty})} \varx_\arc \quad && \forall \location \in \locations \label{eq:balancedness} \\
&&&& \varx_\arc &\in \pos{\integer} && \forall \arc \in \arcs\setminus\arcs_{\artificial}^{0} \label{eq:arc_variable} \\
&&&& \varx_\arc &\in \binary && \forall \arc \in \arcs_{\artificial}^{0}. \label{eq:start_arc_variable}
\end{alignat}
\endgroup
Here, the objective function~\eqref{eq:objective} aims to minimize the total costs of the vehicle rotations. Constraints~\eqref{eq:coverage} ensure that each trip is operated with the required number of vehicles. Flow conservation is guaranteed by constraints~\eqref{eq:flow_conservation}, while constraints~\eqref{eq:balancedness} balance the number of vehicles at each location. Finally, the variables representing the selected arcs of the \graphname\ and their domains are defined in \eqref{eq:arc_variable} and \eqref{eq:start_arc_variable}, where the domain of the artificial starting arcs is binary, since each vehicle can only be used once.
Possible extensions of this model are also discussed in \cite{prause2024approximating}.

\section{Modeling the Health States and Degradation}
\label{sec:degradation}
In this section, we present two different models that interpret the health states of the vehicles as random variables. These models are crucial as they imply the associated failure probability function $\failureprob$.

\subsection{Remaining Useful Life Approach}
The first model relies on the idea that the RUL of a vehicle can be represented as a point estimate of an observable quantity that is subject to some uncertainty. Examples of this would be a train bogie with an estimated RUL of 10,000 km $\pm$ 500 km or a door that can perform 1,500 $\pm$ 150 opening-closing-cycles before a failure occurs.
These interpretations reflect the representation via a mean value and a confidence interval and could therefore be represented by the family of normal distributions $\{\normal{\mu}{\variance}\mid\mu\in\reals,\variance\in\strictpos{\reals}\}$.
By scaling the considered quantity, it can be assumed that the health states are random variables taking values in $[0,1]$, where 1 denotes a state that is as good as new and 0 represents a state that leads to a failure. Here, it would be reasonable to assume that the application of the degradation functions of the services decreases $\mu$ as the vehicle deteriorates, while the uncertainty about the state, i.e., $\variance$, increases with each prediction.
This model would then lead to the following failure probability function:\
\begin{equation*}
    \failureprob(\param) \coloneqq \prob[\vehicle\text{ has a failure}] = \prob[\health{\vehicle}{\timept} \leq 0] = \int\limits_{-\infty}^{0} \pdf_{\param}(x)\,dx,
\end{equation*}
where $\health{\vehicle}{\timept}\sim\normal{\mu}{\variance}$ is the random variable representing the health state of vehicle $\vehicle$ at time $\timept$, and $\pdf_{\param}(x)$ is the PDF of a normal distribution with parameters $\param = (\mu,\variance)^T$ characterizing $\health{\vehicle}{\timept}$.
Note that the failure probability function $\failureprob$ depends solely on the parameters $\param\in\paramspace$.
A visualization of the failure probability of the family of normal distributions as a function of $\mu$ and $\variance$ is shown in Figure~\ref{fig:failureProbExample}.

\begin{figure}
    \centering
    \begin{subfigure}{0.45\linewidth}
        \includegraphics[width=\linewidth]{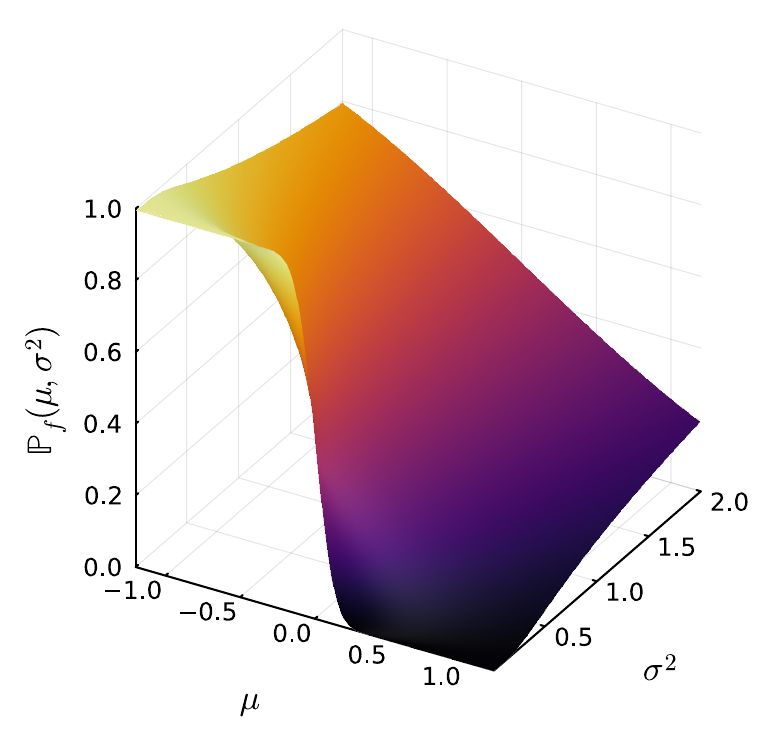}
        \caption{Failure probability $\failureprob$ of the family of normal distributions as a function of $\mu$ and $\variance$.}
        \label{fig:failureProbExample}
    \end{subfigure}
    \hfill
    \begin{subfigure}{0.45\linewidth}
        \includegraphics[width=\linewidth]{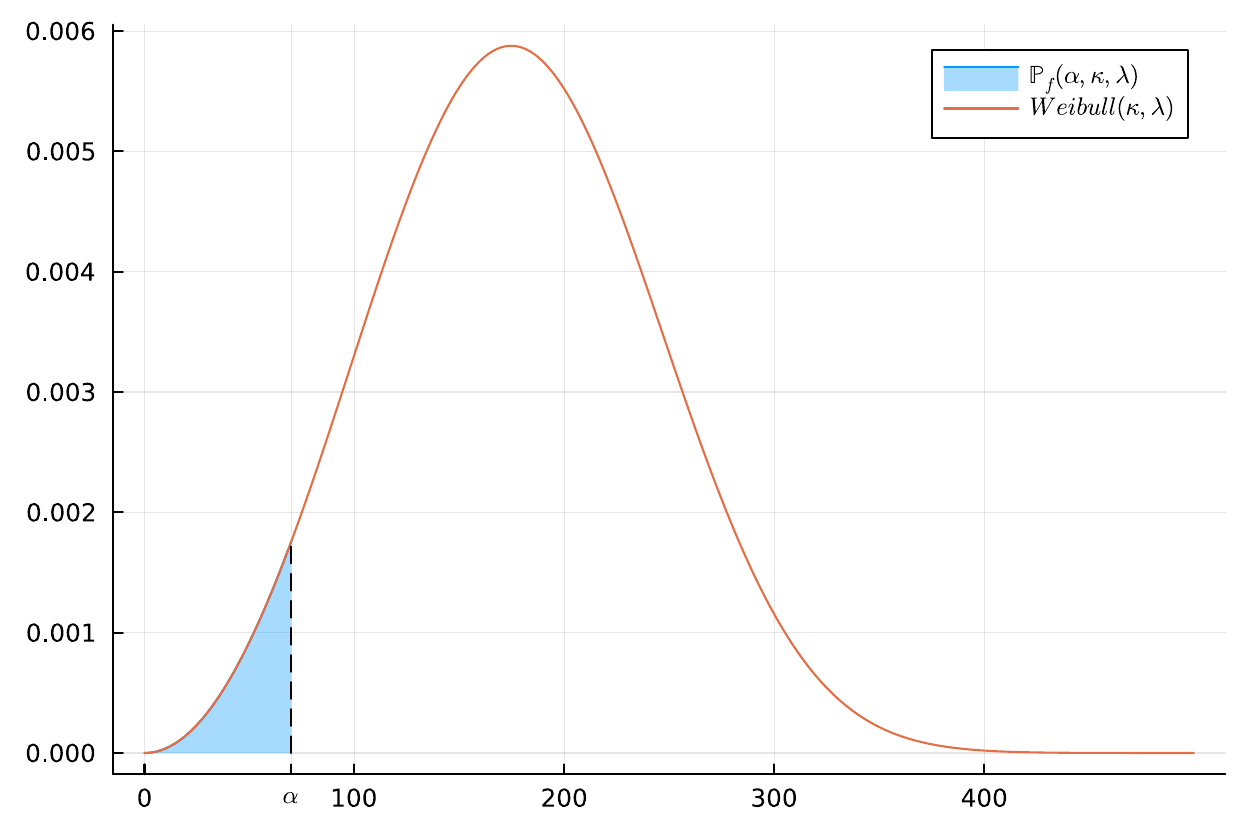}
        \vspace{0.5cm}
        \caption{PDF of a Weibull distribution with $\kappa=3$ and $\lambda=200$. The failure probability $\failureprob$, for $\alpha = 70$, is shaded.}
        \label{fig:weibullDistExample}
    \end{subfigure}

    \begin{subfigure}{0.45\linewidth}
        \includegraphics[width=\linewidth]{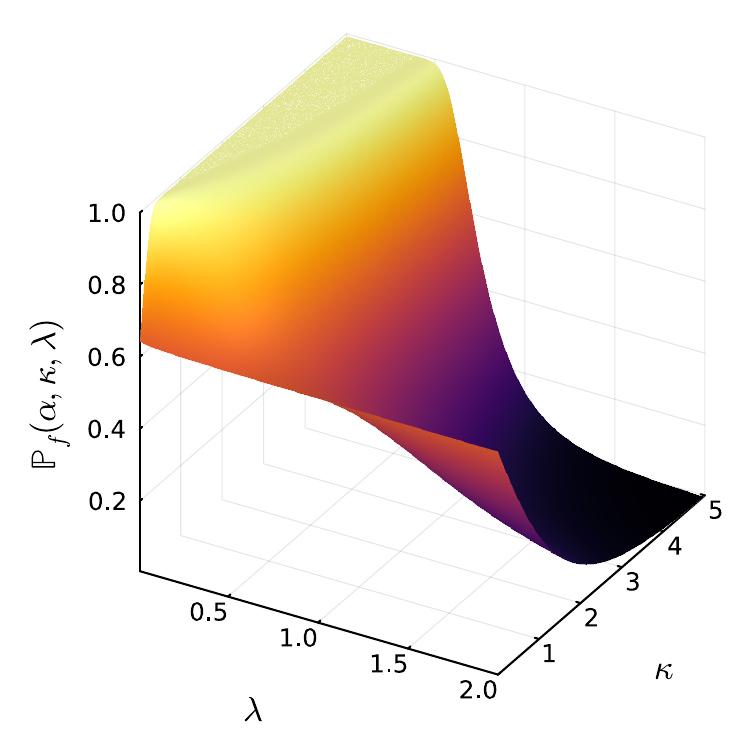}
        \caption{Failure probability $\failureprob$ of the family of Weibull distributions as a function of $\kappa$ and $\lambda$, with fixed $\alpha = 0.7$.}
        \label{fig:failureProbExample2}
    \end{subfigure}
    \hfill
    \begin{subfigure}{0.45\linewidth}
        \includegraphics[width=\linewidth]{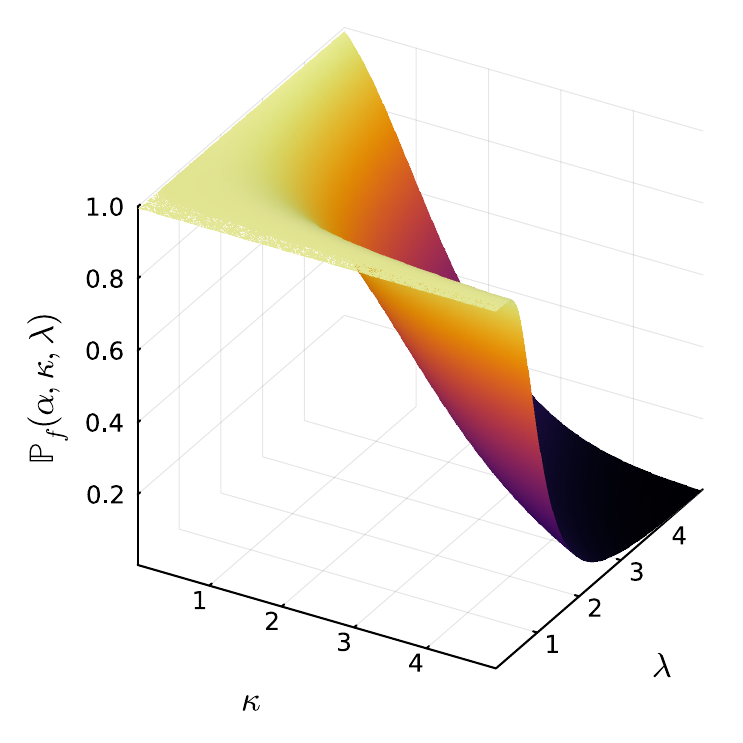}
        \caption{Failure probability $\failureprob$ of the family of gamma distributions as a function of $\kappa$ and $\lambda$, with fixed $\alpha = 5$.}
        \label{fig:failureProbExample3}
    \end{subfigure}
    \caption{Failure probability functions for different families of PDFs.}
\end{figure}

\subsection{Reliability Approach}

Another possible interpretation is based on reliability theory. In this context, the health state represents the failure rate of the corresponding vehicle over the remaining useful life and depicts the probability that the vehicle fails after an additional number of operating cycles or a certain mileage in its current condition.
For an introduction to reliability theory, see \cite{emmert2019introduction}.
Two distributions that are frequently used in this context are the Weibull and gamma distributions.
If we use such a representation, we need to take into account the additional mileage of the operated services, i.e., how long the traveled distances are or how many opening-closing-cycles occur. Let this mileage be $\alpha\in\strictpos{\reals}$, then the failure probability can be determined as follows:\
\begin{equation*}
    \failureprob(\alpha,\param) \coloneqq \prob[\vehicle\text{ has a failure}] = \prob[\health{\vehicle}{\timept} \leq \alpha] = \int\limits_{-\infty}^{\alpha} \pdf_{\param}(x)\,dx,
\end{equation*}
where again $\health{\vehicle}{\timept}\sim \weibull(\kappa,\lambda)$ is the random variable representing the health state of vehicle $\vehicle$ at time $\timept$, and $\pdf_{\param}(x)$ is the PDF of a Weibull distribution with parameters $\param = (\kappa,\lambda)^T$ that characterize $\health{\vehicle}{\timept}$. In addition, $\alpha$ is, as already mentioned, the additional mileage of the operated service.
An example of a two-parameter Weibull distribution and the corresponding failure probability for a given $\alpha$ is shown in Figure~\ref{fig:weibullDistExample}.
In addition, Figure~\ref{fig:failureProbExample2} illustrates the failure probability of a Weibull distribution as a function of $\kappa$ and $\lambda$, and a fixed $\alpha$.
Note that a similar model would also be possible with gamma distributions. An example of the failure probability associated with a two-parameter gamma distribution as a function of $\kappa$ and $\lambda$, for a fixed $\alpha$, is depicted in Figure~\ref{fig:failureProbExample3}.

\section{Consistently Rounding the Parameters}
\label{sec:consistentlyRounding}
In this section, we construct a rounding function whose application allows a consistent underestimation of the parameters when generating the \graphname.
This enables us to assess the rounding error and to show the convergence of a subdivision approach which is presented in the following Section~\ref{sec:approximating}.
It can thus be shown that solutions of the EPCP for a \graphname\ constructed using this rounding function provide a lower bound to \problemabbrv.

In the following, $\diffbar{\reals^n}$ denotes the set of all continuously differentiable functions with domain $\reals^n$ and $\nabla f$ is the gradient of $f$.
Furthermore, $\conv{R}$ is the convex hull of the vectors contained in $R\subseteq\reals^n$, $\langle\cdot,\cdot\rangle$ is the standard scalar product of $\reals^n$ and $\ones{n}=(1,\dots,1)^T\in\reals^n$.

\begin{remark}
    Let $f:\reals^n\rightarrow\reals$ be in $\diffbar{\reals^n}$ and $\varx,\vard\in\reals^n$, then the \emph{directional derivative of $f$ w.r.t.\ $\vard$ at $\varx$} is given by
    $\nabla_{\vard}f(\varx)=\langle\nabla f(\varx),\vard\rangle$.
\end{remark}

\begin{definition}
    Let $f:\reals^n\rightarrow\reals$ be in $\diffbar{\reals^n}$, $\domain\subseteq\reals^n$ and $\directions\subset\reals^n$ finite. Then we call $f$ \emph{directional monotonically increasing on $\domain$ w.r.t.\ $\directions$} if we have $\nabla_{\vard}f(\varx)\geq 0$ for all $\varx\in\domain$ and $\vard\in\directions$.
\end{definition}

\begin{lemma}
    Let $f:\reals^n\rightarrow\reals$ be in $\diffbar{\reals^n}$, $\domain\subseteq\reals^n$ and $\directions\subset\reals^n$ finite. Furthermore, let $f$ be directional monotonically increasing on $\domain$ w.r.t.\ $\directions$. Then $f$ is directional monotonically increasing on $\domain$ w.r.t.\ $\conv{\directions}$.
\end{lemma}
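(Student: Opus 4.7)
The plan is to reduce the statement for arbitrary directions in $\conv{\directions}$ to the already assumed inequalities for the finitely many directions in $\directions$, using nothing more than bilinearity of the standard scalar product and the explicit form of the directional derivative given in the remark.

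Concretely, I would start by fixing an arbitrary $\varx\in\domain$ and an arbitrary $\vard\in\conv{\directions}$. Since $\directions=\{\vard_1,\dots,\vard_k\}$ is finite, by definition of the convex hull I can write
\begin{equation*}
    \vard=\sum_{i=1}^{k}\lambda_i\vard_i,\qquad \lambda_i\geq 0,\quad \sum_{i=1}^{k}\lambda_i=1.
\end{equation*}
Applying the formula $\nabla_{\vard}f(\varx)=\langle\nabla f(\varx),\vard\rangle$ from the preceding remark and exploiting the linearity of $\langle\nabla f(\varx),\cdot\rangle$ in its second argument, I obtain
\begin{equation*}
    \nabla_{\vard}f(\varx)=\left\langle\nabla f(\varx),\sum_{i=1}^{k}\lambda_i\vard_i\right\rangle=\sum_{i=1}^{k}\lambda_i\langle\nabla f(\varx),\vard_i\rangle=\sum_{i=1}^{k}\lambda_i\,\nabla_{\vard_i}f(\varx).
\end{equation*}
By hypothesis, every $\nabla_{\vard_i}f(\varx)$ is non-negative because $\varx\in\domain$ and $\vard_i\in\directions$, and the coefficients $\lambda_i$ are non-negative as well, so the whole sum is non-negative. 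This yields $\nabla_{\vard}f(\varx)\geq 0$ for all $\varx\in\domain$ and $\vard\in\conv{\directions}$, which is exactly the claim.

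There is essentially no obstacle here: the argument is purely a linearity-plus-positivity observation, and the assumption $f\in\diffbar{\reals^n}$ is used only to ensure that $\nabla f(\varx)$ exists and that the directional derivative admits the scalar-product representation. The only thing worth being careful about is that the convex combination representation is guaranteed to be finite, which is why the finiteness of $\directions$ in the hypothesis is convenient (though not strictly necessary by Carathéodory's theorem). No continuity of $\nabla f$ is needed for the argument itself, so the statement in fact holds under the weaker assumption that $f$ is merely differentiable at the points of $\domain$.
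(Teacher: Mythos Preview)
Your proof is correct and essentially identical to the paper's: both write an arbitrary $\vard\in\conv{\directions}$ as a convex combination of the elements of $\directions$, expand $\nabla_{\vard}f(\varx)=\langle\nabla f(\varx),\vard\rangle$ by linearity, and conclude non-negativity from the non-negativity of the summands.
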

\begin{proof}
    Since $f\in\diffbar{\reals^n}$, the derivative in any direction $\vard\in\reals^n$ can be expressed as $\nabla_{\vard}f(\varx)=\langle\nabla f(\varx),\vard\rangle$. W.l.o.g.\ let $\directions=\{r_1,\dots,r_m\}$, then for any $\vard\in\conv{\directions}$ there exists $\lambda\in\pos{\reals}^m$ with $\langle\lambda,\ones{m}\rangle=1$ such that $\vard=\lambda_1 r_1+\dots+\lambda_m r_m$.
    Since $f$ is directional monotonically increasing on $\domain$ w.r.t.\ $\directions$, we can conclude
    \begin{equation*}
        \nabla_{\vard}f(\varx)=\langle\nabla f(\varx),\vard\rangle=
        \lambda_1\underbrace{\langle\nabla f(\varx),r_1\rangle}_\text{$\geq 0$}+\dots+\lambda_m\underbrace{\langle\nabla f(\varx),r_m\rangle}_\text{$\geq 0$}\geq 0\quad\forall \varx\in\domain.\qedhere
    \end{equation*}
\end{proof}


\begin{example}
    \label{ex:directionalNormal}
    Suppose we use the family of normal distributions to represent the health states, i.e., $\{\normal{\mu}{\variance}\mid\mu\in\reals,\variance\in\strictpos{\reals}\}$. Then, the failure probability of vehicle $\vehicle$ at time $\timept$ is
    \begin{equation}
        \failureprob(\mu,\variance)=\prob[\health{\vehicle}{k}\leq 0]=\frac{1}{2}\left(1+\erf\left(\frac{-\mu}{\sqrt{2\variance}}\right)\right),
        \label{eq:failureProbNormal}
    \end{equation}
    where $\erf$ is the Gauss error function. Thus, the gradient is
    \begin{equation*}
        \nabla\failureprob(\mu,\variance)=
        \begin{pmatrix}
            -\frac{\exp\left(\frac{-\mu^2}{2\variance}\right)}{\sqrt{2\pi\variance}} \\
            \frac{\mu\sigma\exp\left(\frac{-\mu^2}{2\variance}\right)}{\sqrt{2\pi\sigma^7}}
        \end{pmatrix}.
    \end{equation*}
    Hence, we obtain
    \begin{align*}
        &\nabla_{-\sbv{1}}\failureprob=\langle\nabla\failureprob,-\sbv{1}\rangle=-\langle\nabla\failureprob,\sbv{1}\rangle=\frac{\exp\left(\frac{-\mu^2}{2\variance}\right)}{\sqrt{2\pi\variance}}>0\quad\forall\mu\in\reals,\variance\in\strictpos{\reals} \\
        &\nabla_{\sbv{2}}\failureprob=\langle\nabla\failureprob,\sbv{2}\rangle=\frac{\mu\sigma\exp\left(\frac{-\mu^2}{2\variance}\right)}{\sqrt{2\pi\sigma^7}}.
    \end{align*}
    This yields
    \begin{equation*}
        \nabla_{\sbv{2}}\failureprob\geq 0\quad\forall\mu\geq 0,\variance\in\strictpos{\reals}\quad\land\quad\nabla_{-\sbv{2}}\failureprob\geq 0\quad\forall\mu\leq 0,\variance\in\strictpos{\reals}
    \end{equation*}
    and we can conclude that $\failureprob$ is directional monotonically increasing on $\{(\mu,\variance)\in\reals^2\mid\mu\geq 0,\variance>0\}$ w.r.t.\ $\{-\sbv{1},\sbv{2}\}$, and directional monotonically increasing on $\{(\mu,\variance)\in\reals^2\mid\mu\leq 0,\variance>0\}$ w.r.t.\ $\{-\sbv{1},-\sbv{2}\}$.
\end{example}

\begin{example}
    \label{ex:directionalWeibull}
    Suppose we use the family of Weibull distributions to represent the health states, i.e., $\{\weibull(\kappa,\lambda)\mid (\kappa,\lambda)\in\strictpos{\reals}^2\}$. Then, the failure probability of vehicle $\vehicle$ at time $\timept$ operating a service with wear $\alpha$ is
    \begin{equation}
        \failureprob(\alpha,\kappa,\lambda)=\prob[\health{\vehicle}{\timept}\leq \alpha]=1-\exp\left({-\left(\frac{\alpha}{\lambda}\right)^{\kappa}}\right).
        \label{eq:failureProbWeibull}
    \end{equation}
    Thus, the gradient is
    \begin{equation*}
        \nabla\failureprob(\alpha,\kappa,\lambda)=
        \begin{pmatrix}
            \frac{\alpha^{\kappa}\kappa\exp\left({-\left(\frac{\alpha}{\lambda}\right)^{\kappa}}\right)}{\lambda^{\kappa+1}} \\
            \frac{-\alpha^{\kappa}\exp\left({-\left(\frac{\alpha}{\lambda}\right)^{\kappa}}\right)\log\left(\frac{\alpha}{\lambda}\right)}{\lambda^{\kappa}}
        \end{pmatrix}
    \end{equation*}
    and the directional derivatives are
    \begin{align*}
        &\nabla_{\sbv{1}}\failureprob=\langle\nabla\failureprob,\sbv{1}\rangle=\frac{\alpha^{\kappa}\kappa\exp\left({-\left(\frac{\alpha}{\lambda}\right)^{\kappa}}\right)}{\lambda^{\kappa+1}}\geq 0\quad\forall(\kappa,\lambda)\in\strictpos{\reals}^2 \\
        &\nabla_{\sbv{2}}\failureprob=\langle\nabla\failureprob,\sbv{2}\rangle=\frac{-\alpha^{\kappa}\exp\left({-\left(\frac{\alpha}{\lambda}\right)^{\kappa}}\right)\log\left(\frac{\alpha}{\lambda}\right)}{\lambda^{\kappa}}.
    \end{align*}
    Thus, we obtain
    \begin{equation*}
        \nabla_{\sbv{2}}\failureprob\geq 0\quad\forall \kappa\in\strictpos{\reals},\lambda\geq\alpha\quad\land\quad\nabla_{-\sbv{2}}\failureprob\geq 0\quad\forall \kappa\in\strictpos{\reals},\lambda\leq\alpha.
    \end{equation*}
    Hence, we can conclude that $\failureprob$ is directional monotonically increasing on $\{(\kappa,\lambda)\in\strictpos{\reals}^2\mid\lambda\geq\alpha\}$ w.r.t.\ $\{\sbv{1},\sbv{2}\}$, and directional monotonically increasing on $\{(\kappa,\lambda)\in\strictpos{\reals}^2\mid\lambda\leq\alpha\}$ w.r.t.\ $\{\sbv{1},-\sbv{2}\}$.
\end{example}

\begin{example}
    \label{ex:directionalGamma}
    Suppose we use the family of gamma distributions to represent the health states, i.e., $\{\gammadist{\kappa}{\lambda}\mid(\kappa,\lambda)\in\strictpos{\reals}^2\}$. Then, the failure probability of vehicle $\vehicle$ at time $\timept$ operating a service with wear $\alpha$ is
    \begin{equation*}
        \failureprob(\alpha,\kappa,\lambda)=\prob[\health{\vehicle}{\timept}\leq \alpha]=\frac{\gamma\left(\kappa,\frac{\alpha}{\lambda}\right)}{\Gamma(\kappa)}
    \end{equation*}
    where $\Gamma(\kappa)=\int_0^\infty t^{\kappa-1} \exp(-t) \,dt$ is the gamma function and $\gamma$ is the lower incomplete gamma function defined as $\gamma\left(\kappa,\frac{\alpha}{\lambda}\right)=\int_0^\frac{\alpha}{\lambda} t^{\kappa-1} \exp(-t) \,dt$.
    Thus, we obtain
    \begin{align*}
        \nabla_{\sbv{2}}\failureprob(\alpha,\kappa,\lambda)&=\langle\nabla\failureprob(\alpha,\kappa,\lambda),\sbv{2}\rangle=\deriv{\lambda}\failureprob(\alpha,\kappa,\lambda)\\
        &=-\frac{1}{\lambda\Gamma(\kappa)}\left(\frac{\alpha}{\lambda}\right)^\kappa\exp\left(-\frac{\alpha}{\lambda}\right)<0.
    \end{align*}
    Unfortunately, we were not able to obtain analytical results showing\linebreak $\nabla_{\sbv{1}}\failureprob(\alpha,\kappa,\lambda)<0$. But computational experiments indicate exactly this behavior.
    Hence, we assume $\failureprob$ to be directional monotonically increasing on $\strictpos{\reals}^2$ w.r.t.\ $\{-\sbv{1},-\sbv{2}\}$.
\end{example}

\begin{lemma}
    \label{lem:directionalExistance}
    Let $f:\reals^n\rightarrow\reals$ be in $\diffbar{\reals^n}$ and $\varx\in\reals^n$, then there exists a vector of signs $\vars=(\varsi{1},\dots,\varsi{n})^T\in\{-1,1\}^n$ such that $f$ is directional monotonically increasing on $\{\varx\}$ w.r.t.\ $\directions=\{\varsi{1}\sbv{1},\dots,\varsi{n}\sbv{n}\}$.
\end{lemma}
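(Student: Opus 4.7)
The plan is to observe that requiring directional monotonicity on a single point $\{\varx\}$ reduces, via the gradient representation of directional derivatives, to making a sign choice coordinate by coordinate. Since $f\in\diffbar{\reals^n}$, for each standard basis vector $\sbv{i}$ we have $\nabla_{\varsi{i}\sbv{i}}f(\varx)=\langle\nabla f(\varx),\varsi{i}\sbv{i}\rangle=\varsi{i}\,\frac{\partial f}{\partial \varx_i}(\varx)$ for any $\varsi{i}\in\{-1,1\}$, so the directional derivatives along $\pm\sbv{i}$ are just $\pm$ the $i$-th partial derivative.

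The key step is then simply to pick the signs $\varsi{i}$ so that $\varsi{i}\,\frac{\partial f}{\partial \varx_i}(\varx)\geq 0$ for every $i\in\{1,\dots,n\}$. Concretely, I would define
\begin{equation*}
    \varsi{i} \coloneqq \begin{cases} 1 & \text{if } \frac{\partial f}{\partial \varx_i}(\varx)\geq 0,\\ -1 & \text{otherwise,}\end{cases}
\end{equation*}
for each $i$, and set $\vars\coloneqq(\varsi{1},\dots,\varsi{n})^T\in\{-1,1\}^n$. With this choice, $\varsi{i}\,\frac{\partial f}{\partial \varx_i}(\varx)=\left|\frac{\partial f}{\partial \varx_i}(\varx)\right|\geq 0$ by construction, so the directional derivative along $\varsi{i}\sbv{i}$ is non-negative for every $i$.

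Putting this together, with $\directions=\{\varsi{1}\sbv{1},\dots,\varsi{n}\sbv{n}\}$, we have $\nabla_{\vard}f(\varx)\geq 0$ for every $\vard\in\directions$, which by definition means $f$ is directional monotonically increasing on $\{\varx\}$ w.r.t.\ $\directions$. There is no real obstacle here: the statement asks only for monotonicity at the single point $\varx$, so no uniformity of the sign pattern over a larger region is needed, and continuous differentiability is invoked only to ensure that the gradient exists and that directional derivatives are given by the inner product $\langle\nabla f(\varx),\vard\rangle$. The mild subtlety worth mentioning is that $\vars$ will in general depend on $\varx$, which is exactly why the conclusion is restricted to the singleton $\{\varx\}$ and not to a larger domain $\domain$; any attempt to extend the result to $\domain\neq\{\varx\}$ would require additional hypotheses, as illustrated by the case distinctions in Examples~\ref{ex:directionalNormal}–\ref{ex:directionalGamma}.
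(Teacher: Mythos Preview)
Your proof is correct and is essentially the same as the paper's: both fix $i\in\{1,\dots,n\}$, split into the cases $\nabla_{\sbv{i}}f(\varx)\geq 0$ and $\nabla_{\sbv{i}}f(\varx)<0$, and set $\varsi{i}=1$ or $\varsi{i}=-1$ accordingly so that $\nabla_{\varsi{i}\sbv{i}}f(\varx)\geq 0$. The only cosmetic difference is that you phrase the outcome as $\varsi{i}\,\partial f/\partial\varx_i(\varx)=\abs{\partial f/\partial\varx_i(\varx)}$, whereas the paper appeals directly to the linearity of the scalar product.
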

\newpage
\begin{proof}
    Consider $\sbv{i}$ for any $i\in\setft{1}{n}$, then we either have $\nabla_{\sbv{i}}f(\varx)\geq 0$ or $\nabla_{\sbv{i}}f(\varx)<0$.
    \begin{description}
        \item[Case 1:] If $\nabla_{\sbv{i}}f(\varx)\geq 0$, setting $\varsi{i}=1$ leads to a direction $\varsi{i}\sbv{i}$ in which $f$ is directional monotonically increasing.
        \item[Case 2:] If $\nabla_{\sbv{i}}f(\varx)<0$, setting $\varsi{i}=-1$ leads to a direction $\varsi{i}\sbv{i}$ with $\nabla_{\varsi{i}\sbv{i}}f(\varx)=-\langle\nabla f(\varx),\sbv{i}\rangle>0$ due to the linearity of the scalar product. Hence, $f$ is directional monotonically increasing in direction $\varsi{i}\sbv{i}$.\qedhere
    \end{description}
\end{proof}

Suppose we have $\failureprob:\reals^n\rightarrow\reals$ with $\failureprob\in\diffbar{\reals^n}$, then according to Lemma~\ref{lem:directionalExistance} there exists a set of directions $\directions_{\varx}=\{\varsi{1}\sbv{1},\dots,\varsi{n}\sbv{n}\}$, with $\vars\in\{-1,1\}^n$, for each $\varx\in\reals^n$ such that $\failureprob$ is directional monotonically increasing on $\{\varx\}$ w.r.t.\ $\directions_{\varx}$.
There are only finitely many of these direction sets $\directions_j$, for $j\in\setft{1}{J}$, and for each of them we define the corresponding domain $\domain_j=\left\{\varx\in\reals^n\mid \directions_{\varx}=\directions_j\right\}$. These $\domain_j$ form a partition of $\reals^n$.
The Examples~\ref{ex:directionalNormal} to \ref{ex:directionalGamma} give an impression of what the resulting sets $\domain_j$ can look like, showing that their structure is quite simple.
These $\domain_j$ and their corresponding directions $\directions_j$ are then employed to determine in which direction the currently considered parameters $\param$ have to be rounded to underestimate their corresponding failure probability $\failureprob(\param)$.
For this purpose, we define the following rounding function, which incorporates this directional information when rounding the parameters.


\begin{definition}
    \label{def:emittingCone}
    Let $\directions=\{r_1,\dots,r_m\}\subset\reals^n$ be finite and $\varx\in\reals^n$, then we define the \emph{cone in direction $\directions$ emitting from $\varx$} to be
    \begin{equation*}
        \emittingcone{\varx}{\directions}\coloneqq\left\{\varx+\sum\limits_{i=1}^{m}\lambda_i r_i\mid\lambda\in\pos{\reals}^m\right\}.
    \end{equation*}
\end{definition}

\begin{definition}
    \label{def:floorTo}
    Let $\discr\subset\reals^n$ be a finite set and $\failureprob$ a failure probability function.
    Moreover, let $\failureprob$ induce a collection of sets $\domain_j\subseteq\paramspace$, $\directions_j\subset\reals^n$, for $j\in\setft{1}{J}$, such that $\failureprob$ is directional monotonically increasing on $\domain_j$ w.r.t.\ $\directions_j$.
    Furthermore, let $\param\in\paramspace$ and let $\domain_c$ be the set containing $\param$ with corresponding $\directions_c$. Then we define rounding function $\floorto{\cdot}{\discr}$ as follows:\
    \begin{equation*}
        \floorto{\cdot}{\discr}:\paramspace\rightarrow\discr,\,\,\floorto{\param}{\discr}\coloneqq\argmin\limits_{\otherparam\in\discr\cap\emittingcone{\param}{-\directions_c}}\left\{\eucldist{\otherparam -\param}\right\}.
    \end{equation*}
\end{definition}
This function ensures that the parameters get rounded to the nearest element of the discretization that lies in a direction in which $\failureprob$ decreases. This will later guarantee that the failure probability is consistently underestimated.

\section{Approximating the Error}
\label{sec:approximating}
As mentioned in Section~\ref{sec:graph}, the \problemabbrv\ can be represented by the flow problem induced by the \completegraphname, while solving the flow problem induced by the \graphname, based on some discretization $\discr$, only provides an approximation to the problem.
Now, we estimate the error that arises between the vehicle rotations in the \completegraphname\ and their counterparts in the \graphname\ when $\floorto{\cdot}{\discr}$ is applied during the construction of the \graphname. We then show that the refinement of $\discr$ leads to a sequence of approximations whose solution values approach the value of an optimal solution for \problemabbrv\ from below. Thus, the solutions of the LP relaxations of the corresponding formulation (\hyperref[eq:objective]{AP}) provide a sequence of lower bounds for \problemabbrv\ for increasingly finer discretizations.

In this section, we assume that the degradation functions $\degradation{\trip}$ associated with the trips $\trip\in\trips$ and the other operations, i.e., deadheading or waiting, are Lipschitz continuous. Furthermore, we suppose that $\failureprob$ is also Lipschitz continuous and that there exist $\domain_j\subseteq\reals^n$ and $\directions_j=\{\varsi{j,1}\sbv{1},\dots,\varsi{j,n}\sbv{n}\}$, for $j\in\setft{1}{J}$ and some $\varsi{j}\in\{-1,1\}^n$, such that $\failureprob$ is directional monotonically increasing on $\domain_j$ w.r.t.\ $\directions_j$.

\subsection{Bounding the Failure Probability}
In the following, we demand that the parameter space $\paramspace$ is a cuboid bounded by some upper and lower values on each component, i.e., $\exists\,l,u\in\reals^n$ such that
\begin{equation*}
    \begin{pmatrix}
        l_1 \\
        \vdots \\
        l_n
    \end{pmatrix}
    \preceq
    \begin{pmatrix}
        \param_1 \\
        \vdots \\
        \param_n
    \end{pmatrix}
    \preceq
    \begin{pmatrix}
        u_1 \\
        \vdots \\
        u_n
    \end{pmatrix}
    \quad\forall\param\in\paramspace,
\end{equation*}
where $\preceq$ denotes componentwise less than or equal to. Furthermore, w.l.o.g.\ we assume $l_i=0$ and $u_i=1$ for all $i\in\setft{1}{n}$.
Next, let $\cube{n}$ be the $n$-cube with side lengths one, then we choose its vertices $V(\cube{n})=\binary^n$ as initial discretization $\discr_{0}$.
We refine this discretization by iteratively decomposing the current cubes into subcubes. For this purpose, we insert $k\in\strictpos{\integer}$ equidistant points on each diagonal of the cubes and consider the vertices of all $(k+1)^n$ resulting subcubes as discretization $\discr_1$.
Subsequently, the procedure is repeated and each of the resulting subcubes is divided again to obtain $\discr_2$.
In this way, a family of discretizations $\{\discr_{i}\}_{i\in\pos{\integer}}$ with $\discr_i=\{0,\frac{1}{k^i},\frac{2}{k^i},\dots,1\}^n$ is obtained.
Note that all $\discr_i$ contain the upper and the lower bound of the parameter space.

In addition, we have to take into account the subsets $\domain_j$ of $\paramspace$ on which $\failureprob$ is directional monotonically increasing w.r.t.\ different sets of directions. For this purpose, the points at which the boundaries of the $\domain_j$ intersect the ``grid lines'' that form the cubes of the current discretization must be added to $\discr_i$.
We denote these points by \emph{additional boundary points} and assume that the boundaries between the $\domain_j$ are hyperplanes whose normal vectors are parallel to one of the ``grid lines''.
This assumption is reasonable as the examples given in Section~\ref{sec:consistentlyRounding} exhibit exactly this behavior.
Thus, the boundary hyperplanes divide the cubes of the discretization into subcubes on which $\failureprob$ possesses a uniform directional monotonicity behavior.
A discretization $\discr$ that possesses these properties is called \emph{suitable} in the following.
Recall Examples~\ref{ex:directionalNormal} and \ref{ex:directionalWeibull}:\ The failure probability of the family of normal distributions is directional monotonically increasing on $\{(\mu,\variance)\in\reals^2\mid\mu\geq 0,\variance>0\}$ w.r.t.\ $\{-\sbv{1},\sbv{2}\}$ and on $\{(\mu,\variance)\in\reals^2\mid\mu\leq 0,\variance>0\}$ w.r.t.\ $\{-\sbv{1},-\sbv{2}\}$. Thus, the points at which the ``grid lines'' of $\discr_i$ intersect the line $\left\{(0,\variance)\mid\variance>0\right\}$ must be added to $\discr_i$.
The same applies to the line $\left\{(\kappa,\alpha)\mid \kappa>0, \alpha\text{ fixed}\right\}$ when considering Weibull distributions.

Now, we show that for a point $\param$ in a cube and a set of directions, there is always a vertex of the cube in the corresponding cone of directions to which $\param$ can be rounded.

\begin{lemma}
    \label{lem:nCubeCorners}
    Let $\cube{n}=[0,1]^n$ be the $n$-dimensional unit cube, $\param\in\cube{n}$ arbitrary, $s=\in\{-1,1\}^n$ a vector of signs, and $\directions=\{\varsi{1}\sbv{1},\dots,\varsi{n}\sbv{n}\}$.
    Then, $\emittingcone{\param}{\directions}$ contains at least one vertex of $\cube{n}$.
\end{lemma}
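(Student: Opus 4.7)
The plan is to exploit the separability of the cone when its generators are signed standard basis vectors. Specifically, I will first rewrite membership in $\emittingcone{\param}{\directions}$ in coordinate form: a point $y \in \reals^n$ lies in the cone if and only if there exist $\lambda_1,\dots,\lambda_n \geq 0$ with $y = \param + \sum_{i=1}^n \lambda_i \varsi{i}\sbv{i}$, which, reading off components, is equivalent to $\varsi{i}(y_i - \param_i) \geq 0$ for every $i \in \setft{1}{n}$ (with $\lambda_i = \varsi{i}(y_i - \param_i)$). This reduces the problem to selecting each coordinate of a candidate vertex independently.

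Next, I will construct a vertex explicitly by a coordinate-wise rule. Define $\node^{*} \in \binary^n = V(\cube{n})$ by
\begin{equation*}
    \node^{*}_i \coloneqq \begin{cases} 1, & \text{if } \varsi{i} = 1, \\ 0, & \text{if } \varsi{i} = -1, \end{cases}
\end{equation*}
for $i \in \setft{1}{n}$. Since $\param \in \cube{n} = [0,1]^n$, we have $0 \leq \param_i \leq 1$, so in the case $\varsi{i} = 1$ we get $\varsi{i}(\node^{*}_i - \param_i) = 1 - \param_i \geq 0$, and in the case $\varsi{i} = -1$ we get $\varsi{i}(\node^{*}_i - \param_i) = -(0 - \param_i) = \param_i \geq 0$. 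Hence $\node^{*}$ satisfies the characterization from the first step and therefore lies in $\emittingcone{\param}{\directions}$, which exhibits the desired vertex of $\cube{n}$.

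There is essentially no obstacle here: the argument is a direct verification that hinges only on the fact that $\cube{n}$ is axis-aligned and that the generators of the cone are (signed) coordinate directions, so each coordinate can be handled independently by picking the corner $0$ or $1$ that lies on the correct side of $\param_i$. The only thing worth being careful about is not to conflate ``cone emitting from $\param$'' with the linear span of $\directions$; the former is translated to $\param$ and uses only nonnegative coefficients, which is exactly what the coordinate-wise inequality $\varsi{i}(y_i - \param_i) \geq 0$ captures.
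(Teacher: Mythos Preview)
Your proof is correct and follows essentially the same approach as the paper: both construct the vertex $\node^{*}$ with $\node^{*}_i = \tfrac{\varsi{i}+1}{2}$ (equivalently, your case-wise definition) and verify coordinate-wise that it lies in $\emittingcone{\param}{\directions}$ using $\param_i \in [0,1]$. The only cosmetic difference is that you first isolate the membership criterion $\varsi{i}(y_i-\param_i)\geq 0$ before checking it, whereas the paper directly exhibits the conic coefficients $\lambda_i = |\node^{*}_i - \param_i|$.
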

\begin{proof}
    The vertices of $\cube{n}$ are $\nodes(\cube{n})=\binary^n$.
    Now, define $\node$ with\linebreak $\node_i=\frac{\varsi{i}+1}{2}\in\binary$ for all $i\in\setft{1}{n}$, then we have $\node\in \nodes(\cube{n})$.
    To show that $\node$ is contained in $\emittingcone{\param}{R}$, we have to find conic coefficients $\lambda_i\geq 0$ such that $\node = \param + \sum_{i=1}^{n}\lambda_i \varsi{i} \sbv{i}$, i.e., $\node_i = \param_i + \lambda_i \varsi{i}$.
    Depending on $\varsi{i}$, we define $\lambda_i\coloneqq\abs{\node_i-\param_i}\geq 0$ for all $i\in\setft{1}{n}$. Since $\param_i\in[0,1]$ we obtain
    \begin{description}
        \item[Case 1:] If $\varsi{i}=1$:\ $\param_i+\varsi{i}\lambda_i=\param_i+1-\param_i=1=\node_i$.
        \item[Case 2:] If $\varsi{i}=-1$:\ $\param_i+\varsi{i}\lambda_i=\param_i-(\param_i-0)=0=\node_i$.
    \end{description}
    Hence, the defined $\lambda_i$ are the desired conic coefficients and we conclude $\node\in \emittingcone{\param}{\directions}$.
\end{proof}
Note that one or more of the additional boundary points could also lie in $\emittingcone{\param}{\directions}$. In this case, these elements of the discretization would be closer to $\param$ than $\node$, since $\node$ is a vertex of the cube itself and thus has the largest distance to $\param$ in the corresponding direction. Therefore, applying $\floorto{\param}{\discr_i}$ would result in one of these additional points.

Next, we prove an upper bound on the approximation error that arises when applying the rounding function $\floorto{\cdot}{\discr_i}$.

\begin{lemma}
    \label{lem:paramError}
    Consider $\{\discr_{i}\}_{i\in\pos{\integer}}$ as defined above and let $\domain_j\subseteq\paramspace$, $\directions_j\subset\reals^n$, for $j\in\setft{1}{J}$, be sets such that $\failureprob$ is directional monotonically increasing on $\domain_j$ w.r.t.\ $\directions_j$. Then, the error resulting from the application of $\floorto{\cdot}{\discr_{i}}$ is $\error_{i}\coloneqq\max\limits_{\param\in\paramspace}\{\eucldist{\param-\floorto{\param}{\discr_{i}}}\} \leq \frac{\sqrt{n}}{k^{i}}$.
\end{lemma}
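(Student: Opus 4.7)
The plan is to reduce the bound to a direct geometric consequence of Lemma~\ref{lem:nCubeCorners}, applied to the small grid cube of $\discr_i$ that contains the query parameter.

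First, I would fix an arbitrary $\param \in \paramspace = [0,1]^n$ and identify the unique $\domain_c$ (among the partition $\{\domain_j\}_{j=1}^J$) that contains $\param$, with associated direction set $\directions_c = \{\varsi{c,1}\sbv{1},\dots,\varsi{c,n}\sbv{n}\}$. By construction of $\discr_i$, the point $\param$ lies inside some axis-aligned cube whose vertices are grid points of the form $(a_1/k^i,\dots,a_n/k^i)$ with $a_j \in \{0,\dots,k^i-1\}$, i.e.\ a translate and scaling of the unit cube $\cube{n}$ by the factor $1/k^i$. All $2^n$ vertices of this small cube belong to $\discr_i$ since the equidistant grid $\{0,\tfrac{1}{k^i},\dots,1\}^n$ is contained in $\discr_i$.

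Next, I would apply Lemma~\ref{lem:nCubeCorners} to this small cube: after translating $\param$ to the origin and rescaling by $k^i$, the lemma with sign vector $-\vars_c$ (yielding the direction set $-\directions_c$) guarantees that the cone $\emittingcone{\param}{-\directions_c}$ contains at least one vertex $\node$ of the containing cube. Since $\node \in \discr_i$, the intersection $\discr_i \cap \emittingcone{\param}{-\directions_c}$ is nonempty, so $\floorto{\param}{\discr_i}$ is well-defined, and by the minimizing definition of the rounding function,
\begin{equation*}
    \eucldist{\param - \floorto{\param}{\discr_i}} \;\leq\; \eucldist{\param - \node}.
\end{equation*}

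Finally, since both $\param$ and $\node$ lie in the same axis-aligned cube of side length $1/k^i$, their Euclidean distance is bounded by the length of the cube's main diagonal, namely $\sqrt{n}/k^i$. Taking the maximum over $\param \in \paramspace$ yields $\error_i \leq \sqrt{n}/k^i$. I would also briefly note that the additional boundary points inserted into $\discr_i$ can only enlarge $\discr_i \cap \emittingcone{\param}{-\directions_c}$, hence only decrease the distance to $\floorto{\param}{\discr_i}$, so they do not affect the bound. There is no serious obstacle here: the only thing to handle with care is the translation-and-scaling step that lets Lemma~\ref{lem:nCubeCorners}, stated for the unit cube, be invoked on the grid cube containing $\param$; everything else is immediate from the definitions.
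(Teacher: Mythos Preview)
Your proposal is correct and follows essentially the same approach as the paper: locate $\param$ in a grid cube of side $1/k^i$, invoke Lemma~\ref{lem:nCubeCorners} (after the obvious translation and scaling) to find a vertex of that cube in $\emittingcone{\param}{-\directions_c}$, and bound the rounding error by the cube's diameter $\sqrt{n}/k^i$. Your use of the minimizing property of $\floorto{\cdot}{\discr_i}$ to compare against the known vertex $\node$ is in fact a slightly cleaner way to conclude than the paper's phrasing, but the argument is the same.
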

\begin{proof}
    Each discretization $\discr_{i}$ induces a partition of $\paramspace = [0,1]^n$ into cubes with identical side lengths, i.e.,
    $\mathcal{P}=\bigcup_{l=1}^{k^{ni}}\mathcal{I}_{l}$, where each $\mathcal{I}_{l}$ is a cube with side length $\frac{1}{k^i}$, for $l=1,...,k^{ni}$.
    Now, let $\param\in\paramspace$ be any parameter value, then there exists some $\mathcal{I}_c$ that contains $\param$, since $\mathcal{P}$ is a partition of $\paramspace$.
    Furthermore, the $\domain_j$ also give a partition of $\paramspace$, and $\failureprob$ is directional monotonically increasing on $\domain_j$ w.r.t.\ $\directions_j$.
    Let $\domain_c$ be the set such that $\param\in\domain_c$, then due to Lemma~\ref{lem:nCubeCorners} at least one vertex of $\mathcal{I}_c$ is contained in $\emittingcone{\param}{-\directions_c}$. Hence, we have $\nodes(\mathcal{I}_c)\cap\emittingcone{\param}{-\directions_c}\neq\emptyset$ and it follows $\discr_i\cap\emittingcone{\param}{-\directions_c}\neq\emptyset$.
    Thus, there exists $\otherparam\coloneqq\floorto{\param}{\discr_i}=\argmin_{\otherparam\in\discr_i\cap\emittingcone{\param}{-\directions_c}}\left\{\eucldist{\otherparam -\param}\right\}$, which is either a vertex of $\mathcal{I}_c$ or one of the additional boundary points.
    Since all of these points are located on the facets of $\mathcal{I}_c$, we obtain $\eucldist{\otherparam -\param}\leq\mathit{dia}(\mathcal{I}_c)=\frac{\sqrt{n}}{k^i}$.
    Hence, it follows $\error_{i}\leq\frac{\sqrt{n}}{k^i}$.
\end{proof}
Lemma~\ref{lem:paramError} shows that the error caused by the application of $\floorto{\cdot}{\discr}$ depends on the diameter of the greatest cube contained in $\discr$ and since $n$ and $k$ are fixed, we have $0\leq\lim_{i\rightarrow\infty} \error_{i} \leq \lim_{i\rightarrow\infty} \frac{\sqrt{n}}{k^i}\rightarrow 0$ for the described family of discretizations $\{\discr_{i}\}_{i\in\pos{\integer}}$.

Next, we estimate the rounding errors between the actual parameters of the health states and the occurring parameters in the \graphname\ caused by the utilization of the rounding function. For this purpose, we compare the parameter values that arise in the \completegraphname\ and the \graphname\ for corresponding vehicle rotations. Recall that the \completegraphname\ is based on a discretization that includes all potentially appearing parameters and thus exactly models both the \problemabbrv\ and the degradation.
Suppose we have a path $\orig$ in the \completegraphname\ that corresponds to a vehicle rotation, then we denote a path consisting of the same operations and trips in the \graphname\ as \emph{discrete counterpart} of $\orig$.
We show that such a counterpart exists and then estimate the occurring rounding errors along this path.

\begin{theorem}
    \label{thm:pathExistence}
    Consider a suitable discretization $\discr$ of $\paramspace$ w.r.t.\ $\failureprob$ and let $\orig$ be any path in the \completegraphname. Then, there exists a discrete counterpart $\rounded$ in the \graphname\ constructed w.r.t.\ $\discr$.
\end{theorem}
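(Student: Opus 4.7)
The plan is to construct $\rounded$ by induction along the arcs of $\orig$, invoking the \graphname\ construction rules of Section~\ref{subsec:arcConstruction} to certify at each step that the required arc exists. Write $\orig = (\arc_0, \arc_1, \ldots, \arc_m)$ with $\arc_0 \in \arcs_{\artificial}^{0}$ the artificial starting arc of some vehicle $\vehicle$ and $\arc_m \in \arcs_{\artificial}^{\infty}$ an artificial ending arc. I would build $\rounded = (\arc_0', \arc_1', \ldots, \arc_m')$ while preserving the invariant that, for every $i$, the tail and head of $\arc_i'$ are \graphname\ nodes that share location and time with the tail and head of $\arc_i$, and $\arc_i'$ represents the same operation (trip, waiting, deadhead, maintenance, or artificial) as $\arc_i$.

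For the base case, the head of $\arc_0$ in the \completegraphname\ is $(\location, 0, \param_{\vehicle,0})$, and the construction of $\arcs_{\artificial}^{0}$ in the \graphname\ directly provides the arc from $\node_{\location}$ to $(\location, 0, \floorto{\param_{\vehicle,0}}{\discr})$, which I take as $\arc_0'$. For the inductive step, suppose $\arc_0', \ldots, \arc_{i-1}'$ have been defined and the head of $\arc_{i-1}'$ is a \graphname\ node $(\location, \timept, \param_i')$ whose location and time agree with the tail of $\arc_i$. A case distinction on the type of $\arc_i$ then delivers $\arc_i'$: a trip arc is added in the \graphname\ from every node of $\nodes(\deploc{\trip}, \deptime{\trip})$; a waiting arc connects time-consecutive nodes at each location; a maintenance arc runs from every node of $\nodes(\location, \timept)$ to the maintenance node $\node_{\maintenance}$; a deadhead arc is added whenever the predicates \texttt{firstAfter} and \texttt{lastBefore} are satisfied, and since these predicates depend only on time and location the same pairs of events are connected in the \completegraphname\ and in the \graphname; and an artificial ending arc leaves every $\nodes(\location, \infty)$-node. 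In each case the head of $\arc_i'$ is, by the \graphname\ construction, the node obtained by applying $\floorto{\cdot}{\discr}$ to the image of $\param_i'$ under the corresponding degradation function, and its location and time coincide with those of the head of $\arc_i$, preserving the invariant.

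Finally, the admissible pruning of non-artificial nodes with zero in- or out-degree does not endanger $\rounded$, since every interior node along the path is incident to the incoming arc $\arc_{i-1}'$ and the outgoing arc $\arc_i'$ and therefore survives the pruning. Hence $\rounded$ is a path in the \graphname\ composed of exactly the same operational steps as $\orig$, which is precisely the definition of a discrete counterpart. I expect the most subtle case to be the deadhead arcs: the argument requires explicitly noting that the Kliewer-style arc reduction via \texttt{firstAfter} and \texttt{lastBefore} depends only on location and time and is parameter-agnostic, so that no deadhead arc present in the \completegraphname\ is missing in the \graphname. The remaining cases are immediate from the construction.
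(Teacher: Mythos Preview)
Your argument is correct but takes a more explicit route than the paper. The paper's proof reduces the claim to a single technical point: it observes that it suffices to show $\floorto{\cdot}{\discr}$ is total on $\paramspace$, and then invokes Lemma~\ref{lem:nCubeCorners} together with the fact that a suitable discretization contains at least $\discr_0=\{0,1\}^n$ to conclude that the cone $\emittingcone{\param}{-\directions_c}$ always meets $\discr$. Everything else is left implicit in the \graphname\ construction rules.

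Your induction with arc-type case analysis unfolds what the paper leaves unsaid, and in particular your handling of the Kliewer-style deadhead reduction and of the degree-based pruning is more careful than anything in the paper's proof. What you do not make explicit is the very point the paper isolates: each time you write ``the head of $\arc_i'$ is the node obtained by applying $\floorto{\cdot}{\discr}$'' you are tacitly relying on the rounding function being total, which is precisely what suitability of $\discr$ together with Lemma~\ref{lem:nCubeCorners} guarantees. Adding one sentence to that effect would make your argument self-contained; otherwise the two approaches are complementary, yours being a constructive expansion of the paper's compressed reduction.
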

\begin{proof}
    It is sufficient to show that for any $\param\in\paramspace$ there is a parameter in $\discr$ to which $\param$ can be rounded by $\floorto{\cdot}{\discr}$.
    If $\paramspace$ is a cuboid, we can assume w.l.o.g.\ $\paramspace=[0,1]^n$ by scaling.
    A discretization that contains the bounds of $\paramspace$ and the additional boundary points induced by $\failureprob$ therefore consists at least of the elements contained in $\discr_0$, as described above.
    Thus, we can apply the idea of Lemma~\ref{lem:nCubeCorners} to $\discr$ and obtain an element to round to. Therefore, we can construct a discrete analogous $\rounded$ for each $\orig$.
\end{proof}

\begin{example}
    Suppose we have $\paramspace=[0,1]$ and a discretization $\discr=\binary$ consisting only of the upper and lower bound of $\paramspace$. Furthermore, assume that $\failureprob$ is directional monotonically increasing on $\paramspace$ w.r.t.\ $\directions=\left\{\sbv{1}\right\}$ and that there is no trip that would lead to a breakdown for a vehicle that is as good as new, i.e., $\degradation{\trip}(0)<1$ for all $\trip\in\trips$.
    Then, we have $\floorto{\param}{\discr}=0$ for each $\param\in\paramspace$, which leads to an extreme case of Theorem~\ref{thm:pathExistence}, since any path in the \completegraphname\ would be projected onto a discretized counterpart $\rounded$ where all parameter values are zero.
\end{example}

\begin{corollary}
    \label{cor:tripDegradation}
    Consider $\{\discr_{i}\}_{i\in\pos{\integer}}$ and an arbitrary trip $\trip\in\trips$. Let $L_{\trip}$ be a Lipschitz constant of $\degradation{\trip}$, $\param_{\orig}\in\paramspace$ a parameter value occurring in the \completegraphname, and $\param_{\rounded}=\floorto{\param_{\orig}}{\discr_{i}}\in\discr_{i}$ its discrete counterpart in the \graphname.
    Then, the error occurring after the operation of $\trip$ is bounded by $\eucldist{\degradation{\trip}(\param_{\orig})-\degradation{\trip}(\param_{\rounded})}\leq L_{\trip}\error_{i}$.
\end{corollary}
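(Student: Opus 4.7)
The plan is to derive the bound by a direct chain of two inequalities: the Lipschitz property of the degradation function, followed by the pointwise rounding error bound established in Lemma~\ref{lem:paramError}. There is no combinatorial content to manage here, so the argument should amount to roughly two lines.

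First, I would invoke the assumption stated at the beginning of Section~\ref{sec:approximating} that $\degradation{\trip}$ is Lipschitz continuous with constant $L_{\trip}$, which yields
\begin{equation*}
    \eucldist{\degradation{\trip}(\param_{\orig}) - \degradation{\trip}(\param_{\rounded})} \leq L_{\trip} \cdot \eucldist{\param_{\orig} - \param_{\rounded}}.
\end{equation*}
Then, since $\param_{\rounded} = \floorto{\param_{\orig}}{\discr_{i}}$, the definition of $\error_{i} = \max_{\param\in\paramspace}\{\eucldist{\param - \floorto{\param}{\discr_{i}}}\}$ immediately gives $\eucldist{\param_{\orig} - \param_{\rounded}} \leq \error_{i}$. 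Note that this second step implicitly uses Theorem~\ref{thm:pathExistence}, which guarantees that $\floorto{\param_{\orig}}{\discr_{i}}$ is well defined, i.e., that the rounding target exists in $\discr_{i}\cap\emittingcone{\param_{\orig}}{-\directions_{c}}$, so that the maximum in the definition of $\error_{i}$ is attained and the bound applies to every $\param\in\paramspace$.

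Combining the two inequalities delivers the claimed bound $\eucldist{\degradation{\trip}(\param_{\orig}) - \degradation{\trip}(\param_{\rounded})} \leq L_{\trip}\error_{i}$. The only subtlety worth flagging, rather than an obstacle, is that $\error_{i}$ is defined as a maximum over all of $\paramspace$, so it correctly upper-bounds the distance for the specific $\param_{\orig}$ under consideration; no further care regarding the particular region $\domain_c\ni\param_{\orig}$ is needed, since Lemma~\ref{lem:paramError} already handles the partition of $\paramspace$ induced by the directional monotonicity sets.
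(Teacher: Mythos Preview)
Your proposal is correct and matches the paper's own proof essentially line for line: apply the Lipschitz continuity of $\degradation{\trip}$ to get $\eucldist{\degradation{\trip}(\param_{\orig})-\degradation{\trip}(\param_{\rounded})}\leq L_{\trip}\eucldist{\param_{\orig}-\param_{\rounded}}$, then invoke Lemma~\ref{lem:paramError} to bound $\eucldist{\param_{\orig}-\floorto{\param_{\orig}}{\discr_{i}}}\leq\error_{i}$. Your remark about well-definedness of the rounding is extra care; the paper absorbs that into Lemma~\ref{lem:paramError} itself and does not mention it again here.
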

\begin{proof}
    Due to the Lipschitz continuity of $\degradation{\trip}$ and Lemma~\ref{lem:paramError} it follows
    \begin{equation*}
        \eucldist{\degradation{\trip}(\param_{\orig})-\degradation{\trip}(\param_{\rounded})}\leq L_{\trip}\eucldist{\param_{\orig}-\param_{\rounded}}=L_{\trip}\eucldist{\param_{\orig}-\floorto{\param_{\orig}}{\discr_{i}}}\leq L_{\trip}\error_{i}.\qedhere
    \end{equation*}
\end{proof}

After bounding the error that can occur due to the operation of a trip, we specify a bound on the error for multiple consecutive trips. Here, the error is propagated and amplified as $\floorto{\cdot}{\discr_{i}}$ is applied after each trip.

\begin{theorem}
    \label{thm:pathDegradation}
    Let $\orig$ be a path in the \completegraphname\ and $\rounded$ its discretized counterpart in the \graphname\ constructed w.r.t.\ $\discr_{i}$ for some $i\in\pos{\integer}$. Additionally,\linebreak let $q$ be the number of trips contained in these paths, and\linebreak $L=\max_{\trip\in\trips}\left\{L_{\trip}\mid L_{\trip}\text{ Lipschitz constant of }\degradation{\trip}\right\}$.
    Then, the error between the parameter value of $\orig$ and its discretized counterpart $\rounded$ after the $\timept$\textsuperscript{th} service is bounded by
    \begin{equation*}
        E_{k}\coloneqq\eucldist{\param_{\orig,\timept}-\param_{\rounded,\timept}}\leq\frac{(L^{\timept+1}-1)\,\error_{i}}{L-1},\quad\forall \timept\in\{0,\dots,q\}.
    \end{equation*}
\end{theorem}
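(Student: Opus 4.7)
The plan is to prove the bound by induction on the index $k$ of the service, using the Lipschitz continuity of the degradation functions together with the per-step rounding error bound from Lemma~\ref{lem:paramError}.

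For the base case $k=0$, no trip has yet been operated, so $\param_{\orig,0}$ is the initial parameter value along $\orig$ and $\param_{\rounded,0}=\floorto{\param_{\orig,0}}{\discr_{i}}$ is the rounded counterpart used to construct $\rounded$. Lemma~\ref{lem:paramError} gives $E_{0}\leq\error_{i}$, which agrees with the claimed bound since $\frac{L^{1}-1}{L-1}\error_{i}=\error_{i}$.

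For the inductive step, assume $E_{\timept}\leq\frac{(L^{\timept+1}-1)\error_{i}}{L-1}$ and let $\trip$ denote the $(\timept+1)$\textsuperscript{th} service along the path. In the \completegraphname\ the updated parameter is $\param_{\orig,\timept+1}=\degradation{\trip}(\param_{\orig,\timept})$, while in the \graphname\ it is the rounded image $\param_{\rounded,\timept+1}=\floorto{\degradation{\trip}(\param_{\rounded,\timept})}{\discr_{i}}$. By the triangle inequality,
\begin{equation*}
E_{\timept+1}\leq\eucldist{\degradation{\trip}(\param_{\orig,\timept})-\degradation{\trip}(\param_{\rounded,\timept})}+\eucldist{\degradation{\trip}(\param_{\rounded,\timept})-\floorto{\degradation{\trip}(\param_{\rounded,\timept})}{\discr_{i}}}.
\end{equation*}
I would bound the first summand by $L\cdot E_{\timept}$ via Lipschitz continuity of $\degradation{\trip}$ (in the spirit of Corollary~\ref{cor:tripDegradation}, applied to the two arguments $\param_{\orig,\timept}$ and $\param_{\rounded,\timept}$) and the second summand by $\error_{i}$ via Lemma~\ref{lem:paramError}. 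Plugging in the induction hypothesis yields
\begin{equation*}
E_{\timept+1}\leq L\cdot\frac{(L^{\timept+1}-1)\error_{i}}{L-1}+\error_{i}=\frac{L^{\timept+2}\error_{i}-L\error_{i}+(L-1)\error_{i}}{L-1}=\frac{(L^{\timept+2}-1)\error_{i}}{L-1},
\end{equation*}
which is the desired bound for $\timept+1$, completing the induction.

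The step that requires the most care is making sure that the Lipschitz constant $L$ uniformly dominates all $L_{\trip}$ occurring along the path (which is guaranteed by the choice of $L$ as the maximum over all trips) and that the same argument also covers the deadhead and waiting segments, whose degradation functions are likewise assumed Lipschitz continuous; for these, one simply replaces $L_{\trip}$ by the corresponding constant, which is again at most $L$ if $L$ is taken as the overall maximum. The rest is a routine geometric-series type simplification, so I would not expect any substantive difficulty beyond the bookkeeping in the inductive step.
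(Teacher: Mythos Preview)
Your proposal is correct and follows essentially the same approach as the paper: an induction on $k$ using the recursion $E_{k+1}\leq L\cdot E_{k}+\error_{i}$, established via the triangle inequality, Lipschitz continuity of $\degradation{\trip}$, and Lemma~\ref{lem:paramError}, and then summed as a geometric series. The only cosmetic difference is that the paper first derives the recursion $E_{\timept}\leq L_{\timept}E_{\timept-1}+\error_{i}$ separately before starting the induction, whereas you merge this into the inductive step.
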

\begin{proof}
    In the following, $\param_{\orig,\timept}\in\paramspace$ and $\param_{\rounded,\timept}\in\discr_i$ are the parameter values of $\orig$ and $\rounded$ after the $\timept$\textsuperscript{th} service, respectively.
    W.l.o.g.\ we assume that the services are indexed in the order of their operation.
    
    Suppose a certain vehicle $\vehicle\in\vehicles$ operates the schedule corresponding to path $\orig$. Then, the parameters of the initial health state of $\vehicle$ determine the initial parameters of $\orig$, i.e., $\param_{\orig,0}=\param_{\vehicle,0}$. Therefore, the initial parameters of $\rounded$ are given by $\param_{\rounded,0}=\floorto{\param_{\orig,0}}{\discr_{i}}$ and by Lemma~\ref{lem:paramError} we obtain\linebreak $E_{0}=\eucldist{\param_{\orig,0}-\param_{\rounded,0}}\leq\error_{i}$.
    In addition, for each $\timept\in\setft{1}{q}$ it holds:\
    \begin{align*}
        E_{\timept}&=\eucldist{\param_{\orig,\timept}-\param_{\rounded,\timept}} \\
        &=\eucldist{\degradation{\trip_{\timept}}(\param_{\orig,\timept-1})-\floorto{\degradation{\trip_{\timept}}(\param_{\rounded,\timept-1})}{\discr_i}} \\
        &=\eucldist{\degradation{\trip_{\timept}}(\param_{\orig,\timept-1})-\degradation{\trip_{\timept}}(\param_{\rounded,\timept-1})+\degradation{\trip_{\timept}}(\param_{\rounded,\timept-1})-\floorto{\degradation{\trip_{\timept}}(\param_{\rounded,\timept-1})}{\discr_i}} \\
        &\leq\eucldist{\degradation{\trip_{\timept}}(\param_{\orig,\timept-1})-\degradation{\trip_{\timept}}(\param_{\rounded,\timept-1})}+\eucldist{\degradation{\trip_{\timept}}(\param_{\rounded,\timept-1})-\floorto{\degradation{\trip_{\timept}}(\param_{\rounded,\timept-1})}{\discr_i}} \\
        &\leq L_{\timept}\eucldist{\param_{\orig,\timept-1}-\param_{\rounded,\timept-1}}+\error_{i} \\
        &=L_{\timept}\cdot E_{\timept-1}+\error_{i}.
    \end{align*}
    In the penultimate transformation, we applied the Lipschitz continuity of $\degradation{\trip_{\timept}}$ and Lemma~\ref{lem:paramError} for $\degradation{\trip_{\timept}}(\param_{\rounded,\timept-1})\in\paramspace$.
    Next, we show by induction on $\timept$ that the stated formula for $E_{\timept}$ holds:\
    \begin{description}
        \item[BC:] When $\timept=0$, we have by Lemma~\ref{lem:paramError}:\
        \begin{equation*}
            E_{0}=\eucldist{\param_{\orig,0}-\param_{\rounded,0}}\leq\error_{i}=\frac{(L^1-1)\,\error_{i}}{L-1}
        \end{equation*}
        \item[IH:] Assume $E_{l}\leq\frac{(L^{l+1}-1)\,\error_{i}}{L-1}$ holds for all $l\in\{0,\dots,\timept\}$.
        \item[IS:] Consider $\timept+1$, then using $L_{\timept+1}\leq L$, it follows:\
        \begin{align*}
            E_{\timept+1}&=L_{\timept+1}\cdot E_{\timept}+\error_{i} \\
            &\leq L\cdot E_{\timept}+\error_{i} \\
            &\leq L\cdot \frac{(L^{\timept}-1)\,\error_{i}}{L-1}+\error_{i} \\
            &=\left(\frac{L^{\timept+1}-L}{L-1}+1\right)\cdot\error_{i} \\
            &=\left(\frac{L^{\timept+1}-L}{L-1}+\frac{L-1}{L-1}\right)\cdot\error_{i} \\
            &=\frac{(L^{\timept+1}-1)\,\error_{i}}{L-1}
        \end{align*}
    \end{description}
    Thus, the claim follows.
\end{proof}

Note that, although we do not consider maintenance operations in Theorem~\ref{thm:pathDegradation}, they can only reduce the resulting error since they reset the parameters to some predefined value $\param_{\maintenance}\in\paramspace$. Therefore, by Lemma~\ref{lem:paramError}, the resulting error is at most $\error_{i}$ and thus no further error is propagated from the services that were operated before the maintenance.

Furthermore, since every timetable consists of only a finite number of services, i.e., $\abs{\trips}\in\strictpos{\integer}$, there exists a maximum number of services $q\leq\abs{\trips}$ that can be performed consecutively by any vehicle. Therefore, the maximum error that would be possible for each path is $E_{\max}(i)\leq\frac{(L^{q+1}-1)\,\error_{i}}{L-1}$, where $L$ and $q$ are constants. Thus, $\lim\limits_{i\rightarrow\infty}E_{\max}(i)\rightarrow0$.
Although $q$ and $L$ can be rather large in real world instances, the error depends on the granularity of the considered discretization, and therefore the refinement procedure determines the rate of convergence, which can be exponentially fast.
An example of the error propagation for a one-dimensional parameter space is depicted in Figure~\ref{fig:errorPropagation}.

\begin{figure}
    \centering
    \resizebox{0.8\linewidth}{!}{
\begin{tikzpicture}
    \tikzset{
        dot/.style = {circle,black,fill, minimum size=0pt, inner sep=0pt, outer sep=0pt}
    }
    \tikzset{
        error/.style = {ultra thick,blue}
    }
    \node[black] (init) at (-0.5,0.9) {\tiny$\param_{\vehicle,0}$};
    \node[dot] (o0) at (0, 0.9) {};
    \node[dot] (o1) at (1, 0.81) {};
    \node[dot] (o2) at (2, 0.54) {};
    \node[dot] (o3) at (3, 0.49) {};
    \node[dot] (o4) at (4, 0.31) {};
    \node[dot] (o5) at (5, 0.04) {};
    \node[dot] (r0) at (0, 1) {};
    \node[dot] (rt1) at (1, 0.91) {};
    \node[dot] (r1) at (1, 1) {};
    \node[dot] (rt2) at (2, 0.73) {};
    \node[dot] (r2) at (2, 0.75) {};
    \node[dot] (rt3) at (3, 0.71) {};
    \node[dot] (r3) at (3, 0.75) {};
    \node[dot] (rt4) at (4, 0.57) {};
    \node[dot] (r4) at (4, 0.75) {};
    \node[dot] (rt5) at (5, 0.48) {};
    \node[dot] (r5) at (5, 0.5) {};

    \foreach \i in {0,...,4}{
        \draw[gray!50] (0,0.25*\i) -- (5,0.25*\i);
    }
    \foreach \i in {0,...,5}{
        \node[black] (l\i) at (\i,1.3) {$\param_{\i}$};
    }
    \node[black] (lo) at (5.2,0.0) {$\orig$};
    \node[black] (lr) at (5.2,0.5) {$\rounded$};
    \draw[error] (o0) -- (r0);
    \draw[error] (o1) -- (r1);
    \draw[error] (o2) -- (r2);
    \draw[error] (o3) -- (r3);
    \draw[error] (o4) -- (r4);
    \draw[error] (o5) -- (r5);
    \draw[->,black] (init) -- (o0);
    \draw[black] (o0) -- (o1);
    \draw[black] (o1) -- (o2);
    \draw[black] (o2) -- (o3);
    \draw[black] (o3) -- (o4);
    \draw[black] (o4) -- (o5);
    \draw[black] (o0) -- (r0);
    \draw[black] (r0) -- (rt1);
    \draw[black] (rt1) -- (r1);
    \draw[black] (r1) -- (rt2);
    \draw[black] (rt2) -- (r2);
    \draw[black] (r2) -- (rt3);
    \draw[black] (rt3) -- (r3);
    \draw[black] (r3) -- (rt4);
    \draw[black] (rt4) -- (r4);
    \draw[black] (r4) -- (rt5);
    \draw[black] (rt5) -- (r5);
\end{tikzpicture}%
}
    \caption{Example to illustrate the error propagation. The lower path $\orig$ is the one using continuous parameter values, while the upper path $\rounded$ is based on the discretization given by the gray lines. The errors $E_{\timept}$ are highlighted in blue, and $\param_{\vehicle,0}$ is the initial parameter value of the utilized vehicle $\vehicle$.}
    \label{fig:errorPropagation}
\end{figure}
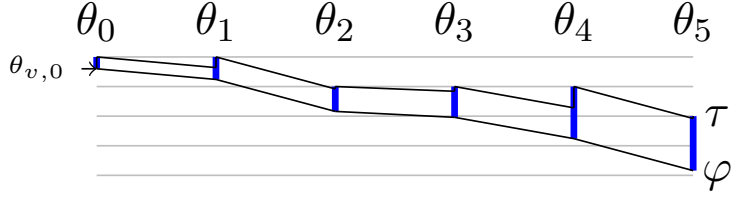

\begin{corollary}
    \label{cor:probError}
    Let $\orig$ be a path in the \completegraphname\ and $\rounded$ be its discretized counterpart in the \graphname\ constructed w.r.t.\ $\discr_{i}$ for some $i\in\pos{\integer}$.
    Furthermore,\linebreak let $q$ be the number of trips contained in these paths,\linebreak $L=\max_{\trip\in\trips}\left\{L_{\trip}\mid L_{\trip}\text{ Lipschitz constant of }\degradation{\trip}\right\}$, and $\failureprob$ a Lipschitz continuous function with Lipschitz constant $L_{\prob}$.
    Then, the error between the failure probability of $\orig$ and its discretized counterpart $\rounded$ after the $\timept$\textsuperscript{th} service is bounded by
    \begin{equation*}
        \eucldist{\failureprob(\param_{\orig,\timept})-\failureprob(\param_{\rounded,\timept})}\leq\frac{L_{\prob}(L^{\timept+1}-1)\,\error_{i}}{L-1},\quad\forall \timept\in\setft{0}{q}.
    \end{equation*}
\end{corollary}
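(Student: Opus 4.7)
The plan is very short: this is a direct corollary combining the Lipschitz continuity of $\failureprob$ with the parameter error bound from Theorem~\ref{thm:pathDegradation}. The main work has already been done in establishing the propagated bound $E_{\timept} \leq \frac{(L^{\timept+1}-1)\error_i}{L-1}$, so here I would essentially just apply the Lipschitz constant $L_{\prob}$ on top of it.

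Concretely, I would start by fixing an index $\timept\in\setft{0}{q}$ and write the quantity of interest as
\[
\eucldist{\failureprob(\param_{\orig,\timept})-\failureprob(\param_{\rounded,\timept})}.
\]
By hypothesis, $\failureprob$ is Lipschitz continuous with constant $L_{\prob}$, which immediately gives
\[
\eucldist{\failureprob(\param_{\orig,\timept})-\failureprob(\param_{\rounded,\timept})} \leq L_{\prob}\,\eucldist{\param_{\orig,\timept}-\param_{\rounded,\timept}}.
\]
Then I would invoke Theorem~\ref{thm:pathDegradation} applied to the very same path $\orig$ and its discretized counterpart $\rounded$, which states that $\eucldist{\param_{\orig,\timept}-\param_{\rounded,\timept}} \leq \frac{(L^{\timept+1}-1)\error_i}{L-1}$. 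Chaining the two inequalities yields the stated bound.

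There is no real obstacle here, since the assumptions of the corollary are exactly what is required to invoke the two ingredients: the Lipschitz assumption on $\failureprob$ is stated explicitly, the Lipschitz assumption on each $\degradation{\trip}$ and the definition of $L$ are inherited from Theorem~\ref{thm:pathDegradation}, and the notions of $\param_{\orig,\timept}$, $\param_{\rounded,\timept}$, and $\error_{i}$ are the same as there. The only minor thing I would mention is that the bound holds uniformly for all $\timept\in\setft{0}{q}$ because Theorem~\ref{thm:pathDegradation} is proved for every such index.
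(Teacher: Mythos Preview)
Your proposal is correct and matches the paper's proof essentially verbatim: apply the Lipschitz constant $L_{\prob}$ of $\failureprob$ to obtain $\eucldist{\failureprob(\param_{\orig,\timept})-\failureprob(\param_{\rounded,\timept})}\leq L_{\prob}\eucldist{\param_{\orig,\timept}-\param_{\rounded,\timept}}$, then invoke Theorem~\ref{thm:pathDegradation} for the parameter bound. There is nothing to add.
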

\begin{proof}
    By applying the Lipschitz continuity of $\failureprob$ and Theorem~\ref{thm:pathDegradation}, we conclude
    \begin{equation*}
        \eucldist{\failureprob(\param_{\orig,\timept})-\failureprob(\param_{\rounded,\timept})}\leq L_{\prob}\eucldist{\param_{\orig,\timept}-\param_{\rounded,\timept}}\leq\frac{L_{\prob}(L^{\timept+1}-1)\,\error_{i}}{L-1}.\qedhere
    \end{equation*}
\end{proof}

Corollary~\ref{cor:probError} shows that the error of the failure probability of any path further approaches zero with each refinement of the discretization.
In addition to Corollary~\ref{cor:probError}, it holds $\eucldist{\failureprob(\param_{\orig,\timept})-\failureprob(\param_{\rounded,\timept})}\leq1$, since $\failureprob$ is based on a cumulative distribution function and can therefore only take values in $[0,1]$.
Hence, the cost error in the objective function of the approximate problem w.r.t.\ a given path $\orig$ is at most $c_{F}\cdot q$, where $q$ is the number of services contained in $\orig$.

\subsection{Approaching the Optimum from Below}
Next, we show that the utilization of $\floorto{\cdot}{\discr}$ leads to a consistent underestimation of the parameters in the \graphname.
Consequently, the failure probability is also underestimated and thus each path in the \graphname\ has lower costs than the corresponding path in the \completegraphname. Therefore, the objective value of a solution to the approximate problem is less than or equal to the problem induced by the \completegraphname, i.e., the \problemabbrv\ itself.
This observation allows us to derive a method that approaches the optimal value of a solution for \problemabbrv\ from below and can therefore be used to derive lower bounds or for a dual solution approach.

Recall that we have intentionally included the additional boundary\linebreak points in the discretizations $\{\discr_{i}\}_{i\in\pos{\integer}}$. Since these points are closer to the interior points of each cube than the vertices of the cube, the parameters are rounded to these values if they lie in the cone emanating from the parameter in question, before being rounded to other elements of $\discr$.
This prevents the rounding function from mapping parameters to sets on which $\failureprob$ has a different directional monotonicity.

From now on, we additionally require that the $\domain_j$ induced by the failure probability $\failureprob$ possess an order and that we have $\failureprob(\param)\leq\failureprob(\otherparam)$ for $\param\in\domain_j$, $\otherparam\in\domain_{j+1}$, and $j\in\setft{1}{J-1}$, where $J$ is the number of domains with different directional monotonicity.
As before, this assumption is reasonable since the families of normal and Weibull distributions exhibit exactly this behavior, see Examples~\ref{ex:monotonicNormal} and \ref{ex:monotonicWeibull}, and the family of gamma distributions has the same directional monotonicity on the entire parameter space $\paramspace$.

\begin{example}
    \label{ex:monotonicNormal}
    For the normal distribution, we have $\paramspace=\domain_1\cup\domain_2$ with\linebreak $\domain_1=\{(\mu,\variance)\in\reals^2\mid\mu\geq 0,\variance>0\}$, $\domain_2=\{(\mu,\variance)\in\reals^2\mid\mu\leq 0,\variance>0\}$ and failure probability \eqref{eq:failureProbNormal}.
    \begin{description}
        \item[$\domain_1$:] $\mu\geq 0$, $\variance >0$:\
        \begin{equation*}
            \failureprob(\mu,\variance)=\frac{1}{2}\Bigg(1+\overbrace{\erf\Bigg(\underbrace{\frac{-\mu}{\sqrt{2\variance}}}_{\leq\,0}\Bigg)}^{\in\,[-1,0]}\Bigg)\in\left[0,\tfrac{1}{2}\right]
        \end{equation*}
        \item[$\domain_2$:] $\mu\leq 0$, $\variance >0$:\
        \begin{equation*}
            \failureprob(\mu,\variance)=\frac{1}{2}\Bigg(1+\overbrace{\erf\Bigg(\underbrace{\frac{-\mu}{\sqrt{2\variance}}}_{\geq\,0}\Bigg)}^{\in\,[0,1]}\Bigg)\in\left[\tfrac{1}{2},1\right]
        \end{equation*}
    \end{description}
    Hence, we obtain $\failureprob(\param)\leq\failureprob(\otherparam)$ for $\param\in\domain_1$, $\otherparam\in\domain_2$.
\end{example}

\begin{example}
    \label{ex:monotonicWeibull}
    For the Weibull distribution, we have $\paramspace=\domain_1\cup\domain_2$ with $\domain_1=\{(\kappa,\lambda)\in\strictpos{\reals}^2\mid\lambda\geq\alpha\}$, $\domain_2=\{(\kappa,\lambda)\in\strictpos{\reals}^2\mid\lambda\leq\alpha\}$ and failure probability \eqref{eq:failureProbWeibull}.
    \begin{description}
        \item[$\domain_1$:] $\kappa >0$, $\lambda\geq\alpha$:\
        \begin{equation*}
            \failureprob(\alpha,\kappa,\lambda)=1-\exp\Big(\overbrace{-\Big(\underbrace{\frac{\alpha}{\lambda}}_{\leq\,1}\Big)^{\kappa}}^{\in\,[-1,0]}\Big)\in\left[0,\tfrac{e-1}{e}\right]
        \end{equation*}
        \item[$\domain_2$:] $\kappa >0$, $\lambda\leq\alpha$:\
        \begin{equation*}
            \failureprob(\alpha,\kappa,\lambda)=1-\exp\Big(\overbrace{-\Big(\underbrace{\frac{\alpha}{\lambda}}_{\geq\,1}\Big)^{\kappa}}^{\leq\,-1}\Big)\in\left[\tfrac{e-1}{e},1\right]
        \end{equation*}
    \end{description}
    Hence, we obtain $\failureprob(\param)\leq\failureprob(\otherparam)$ for $\param\in\domain_1$, $\otherparam\in\domain_2$.
\end{example}

Furthermore, we assume in the following that all degradation functions are aligned with the considered failure probability, as specified in Definition~\ref{def:aligned}. This reflects the idea that new or recently maintained vehicles do not degrade faster than vehicles that are already worn out.

\begin{definition}
    \label{def:aligned}
    Let $\failureprob$ be the considered failure probability and $\degradation{\trip}$ the degradation function of some trip $\trip\in\trips$. Then we say that $\degradation{\trip}$ is \emph{aligned} with $\failureprob$ if it holds $\failureprob(\degradation{\trip}(\param))\leq\failureprob(\degradation{\trip}(\otherparam))$ for all $\param,\otherparam\in\paramspace$ with $\failureprob(\param)\leq\failureprob(\otherparam)$.
\end{definition}

\begin{definition}
    \label{def:signedCompwiseLessEqual}
    Let $a,b\in\reals^n$ and $\vars\in\{-1,1\}^n$, then we write $a\sgnleq{\vars} b$ to indicate that $a$ is \emph{componentwise less than or equal to $b$ w.r.t.\ signs $\vars$} if $a_i\leq b_i$ if $\varsi{i}=1$ and $a_i\geq b_i$ if $\varsi{i}=-1$ holds for all $i\in\setft{1}{n}$.
\end{definition}

\begin{lemma}
    \label{lem:applyRounding}
    Consider a suitable discretization $\discr$ of $\paramspace$ w.r.t.\ $\failureprob$ and let $\param\in\domain\subseteq\paramspace$, where $\failureprob$ is directional monotonically increasing on $\domain$ w.r.t.\ $\directions=\{\varsi{1}\sbv{1},\dots,\varsi{n}\sbv{n}\}$ with $\vars\in\{-1,1\}^n$. Then, it holds $\floorto{\param}{\discr}\sgnleq{\vars}\param$.
\end{lemma}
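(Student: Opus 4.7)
The plan is to unpack the definitions of $\floorto{\cdot}{\discr}$ and of the emitting cone $\emittingcone{\cdot}{\cdot}$ and then read off the inequalities componentwise. First, I note that by Definition~\ref{def:floorTo}, $\floorto{\param}{\discr}$ is chosen from $\discr\cap\emittingcone{\param}{-\directions_c}$, where $\domain_c$ is the set of the partition containing $\param$ and $\directions_c$ its associated direction set. Under the assumption of the lemma, $\directions_c=\directions=\{\varsi{1}\sbv{1},\dots,\varsi{n}\sbv{n}\}$, so $-\directions_c=\{-\varsi{1}\sbv{1},\dots,-\varsi{n}\sbv{n}\}$. By Theorem~\ref{thm:pathExistence} (or directly Lemma~\ref{lem:nCubeCorners} together with the suitability of $\discr$), this intersection is non-empty, hence the argmin exists.

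Next, by Definition~\ref{def:emittingCone}, any element of $\emittingcone{\param}{-\directions_c}$ has the form $\param-\sum_{i=1}^{n}\lambda_i\varsi{i}\sbv{i}$ for some $\lambda\in\pos{\reals}^n$. In particular, there exist $\lambda_1,\dots,\lambda_n\geq 0$ such that
\begin{equation*}
    \floorto{\param}{\discr}_i=\param_i-\lambda_i\varsi{i}\qquad\forall i\in\setft{1}{n}.
\end{equation*}

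Finally, I distinguish the two possible values of each $\varsi{i}$. If $\varsi{i}=1$, then $\floorto{\param}{\discr}_i=\param_i-\lambda_i\leq\param_i$, which matches the first condition of Definition~\ref{def:signedCompwiseLessEqual}. If $\varsi{i}=-1$, then $\floorto{\param}{\discr}_i=\param_i+\lambda_i\geq\param_i$, matching the second condition. Both cases together give $\floorto{\param}{\discr}\sgnleq{\vars}\param$, which is the claim.

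The argument is essentially a bookkeeping exercise; the only subtlety I expect is making sure the intersection $\discr\cap\emittingcone{\param}{-\directions_c}$ is non-empty so that the $\argmin$ is well defined, which is why $\discr$ must be assumed suitable (so Lemma~\ref{lem:nCubeCorners} applies inside the cube of the discretization containing $\param$). No further analytic work is needed.
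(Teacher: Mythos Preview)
Your proof is correct and follows essentially the same route as the paper's: both arguments establish non-emptiness of $\discr\cap\emittingcone{\param}{-\directions}$ via Lemma~\ref{lem:nCubeCorners} and the suitability of $\discr$, then write $\floorto{\param}{\discr}=\param-\sum_{i=1}^{n}\lambda_i\varsi{i}\sbv{i}$ with $\lambda\in\pos{\reals}^n$ from Definition~\ref{def:emittingCone} and read off the componentwise inequalities of Definition~\ref{def:signedCompwiseLessEqual}. There is no substantive difference in structure or content.
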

\begin{proof}
    Analogous to the proof of Theorem~\ref{thm:pathExistence}, we assume w.l.o.g.\ that\linebreak $\paramspace=[0,1]^n$ and that $\discr$ contains at least the elements of $\discr_0$.
    By Lemma~\ref{lem:nCubeCorners} we then have $\discr\cap\emittingcone{\param}{-\directions}\neq\emptyset$ and set $\otherparam\coloneqq\floorto{\param}{\discr}=\argmin_{\psi\in\discr\cap\emittingcone{\param}{-\directions}}\{\eucldist{\psi-\param}\}$.
    Then, the definition of $\emittingcone{\param}{-\directions}$, i.e., Definition~\ref{def:emittingCone}, implies that there exists $\lambda\in\pos{\reals}^n$ such that $\otherparam=\param-\sum_{i=1}^{n}\lambda_i \varsi{i} \sbv{i}$.
    It follows that $\otherparam_i\leq\param_i$ if $\varsi{i}=1$ and $\otherparam_i\geq\param_i$ if $\varsi{i}=-1$. From this we conclude $\floorto{\param}{\discr}=\otherparam\sgnleq{\vars}\param$.
\end{proof}

\begin{corollary}
    \label{cor:applyRoundingProb}
    Consider a suitable discretization $\discr$ of $\paramspace$ w.r.t.\ $\failureprob$ and let $\param\in\domain\subseteq\paramspace$, where $\failureprob$ is directional monotonically increasing on $\domain$ w.r.t.\ $\directions=\{\varsi{1}\sbv{1},\dots,\varsi{n}\sbv{n}\}$ with $\vars\in\{-1,1\}^n$. Then, it holds $\failureprob(\floorto{\param}{\discr})\leq\failureprob(\param)$.
\end{corollary}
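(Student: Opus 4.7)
The plan is to combine Lemma~\ref{lem:applyRounding} with the fundamental theorem of calculus along the straight-line segment from $\param$ to $\otherparam \coloneqq \floorto{\param}{\discr}$. First, I would invoke Lemma~\ref{lem:applyRounding} to obtain $\otherparam \sgnleq{\vars}\param$. Unpacking Definition~\ref{def:signedCompwiseLessEqual} yields nonnegative coefficients $\lambda_i \coloneqq \varsi{i}(\param_i - \otherparam_i) \geq 0$ such that
\[
\otherparam - \param = -\sum_{i=1}^n \lambda_i\,\varsi{i}\,\sbv{i},
\]
so the displacement from $\param$ to $\otherparam$ is a nonnegative conic combination of the directions $-\varsi{i}\sbv{i}$.

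Next, I would argue that the segment $\gamma(t) \coloneqq \param + t(\otherparam-\param)$, $t\in[0,1]$, stays inside $\overline{\domain}$. This is where the suitability of $\discr$ enters: the additional boundary points, i.e., the intersections of the grid lines of $\discr$ with the axis-aligned hyperplanes bounding the $\domain_j$, are contained in $\discr$. Were the segment to leave $\domain$, it would cross one such boundary hyperplane at a point lying on a grid line inside $\emittingcone{\param}{-\directions}$. That crossing point would then be an element of $\discr\cap\emittingcone{\param}{-\directions}$ strictly closer to $\param$ than $\otherparam$, contradicting the $\argmin$ in Definition~\ref{def:floorTo}. Since $\failureprob\in\diffbar{\reals^n}$, the fundamental theorem of calculus then gives
\[
\failureprob(\otherparam)-\failureprob(\param) = \int_0^1 \langle\nabla\failureprob(\gamma(t)),\,\otherparam-\param\rangle\,dt = -\sum_{i=1}^n \lambda_i\int_0^1 \nabla_{\varsi{i}\sbv{i}}\failureprob(\gamma(t))\,dt.
\]
By continuity of $\nabla\failureprob$ and the directional monotonicity of $\failureprob$ on $\domain$ w.r.t.\ $\directions$, each inner directional derivative is nonnegative on $\overline{\domain}$, and together with $\lambda_i\geq 0$ this forces the right-hand side to be at most zero, yielding the claim.

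The main obstacle I anticipate is making the containment $\gamma([0,1])\subseteq\overline{\domain}$ fully rigorous; the rest is a direct computation. This containment hinges on both structural assumptions built into the notion of a suitable discretization --- axis-aligned boundary hyperplanes between the $\domain_j$ and inclusion of their grid-intersection points in $\discr$ --- without which the rounded point could land in a neighboring region where $\failureprob$ exhibits a different directional monotonicity pattern and the sign argument would collapse.
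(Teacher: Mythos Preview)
Your proposal is correct and follows the same line as the paper, which disposes of the corollary in a single sentence: ``Apply Lemma~\ref{lem:applyRounding} and the directional monotonicity of $\failureprob$ on $\domain$ w.r.t.\ $\directions$.'' You have essentially unpacked what that sentence means --- the line-integral computation and, more substantively, the segment-containment argument $\gamma([0,1])\subseteq\overline{\domain}$ that the paper leaves entirely implicit; your identification of this as the only nontrivial step and of where the suitability hypothesis enters is accurate.
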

\begin{proof}
    Apply Lemma~\ref{lem:applyRounding} and the directional monotonicity of $\failureprob$ on $\domain$ w.r.t.\ $\directions$.
\end{proof}

\begin{theorem}
    \label{thm:underestimateProb}
    Consider a suitable discretization $\discr$ of $\paramspace$ w.r.t.\ $\failureprob$ and let $\orig$ be a path in the \completegraphname, and $\rounded$ its discretized counterpart in the \graphname.
    Then, it holds $\failureprob(\param_{\rounded,\timept})\leq\failureprob(\param_{\orig,\timept})$ for all $k\in\{0,\dots,q\}$, where $q$ is the number of services contained in $\orig$.
\end{theorem}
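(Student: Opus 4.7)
The natural approach is induction on $\timept$, using the two structural ingredients that have been set up: Corollary~\ref{cor:applyRoundingProb} (rounding weakly decreases $\failureprob$) and Definition~\ref{def:aligned} (degradation preserves the $\failureprob$-ordering). The plan is to show that the inequality $\failureprob(\param_{\rounded,\timept})\leq\failureprob(\param_{\orig,\timept})$ is propagated along the path service by service.

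For the base case $\timept=0$, both paths start from the initial health parameter of the assigned vehicle, so $\param_{\orig,0}=\param_{\vehicle,0}$ and $\param_{\rounded,0}=\floorto{\param_{\vehicle,0}}{\discr}$ by construction of the \graphname. A single application of Corollary~\ref{cor:applyRoundingProb} yields $\failureprob(\param_{\rounded,0})\leq\failureprob(\param_{\orig,0})$. For the inductive step, assume the claim for $\timept$ and consider the $(\timept{+}1)$-st service. If this service is a trip, a deadhead, or a waiting operation, we have $\param_{\orig,\timept+1}=\degradation{\trip_{\timept+1}}(\param_{\orig,\timept})$ and $\param_{\rounded,\timept+1}=\floorto{\degradation{\trip_{\timept+1}}(\param_{\rounded,\timept})}{\discr}$. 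The induction hypothesis combined with the alignment of $\degradation{\trip_{\timept+1}}$ with $\failureprob$ gives
\begin{equation*}
    \failureprob\bigl(\degradation{\trip_{\timept+1}}(\param_{\rounded,\timept})\bigr)\leq\failureprob\bigl(\degradation{\trip_{\timept+1}}(\param_{\orig,\timept})\bigr)=\failureprob(\param_{\orig,\timept+1}),
\end{equation*}
and then Corollary~\ref{cor:applyRoundingProb} applied to the intermediate value $\degradation{\trip_{\timept+1}}(\param_{\rounded,\timept})\in\paramspace$ yields $\failureprob(\param_{\rounded,\timept+1})\leq\failureprob(\degradation{\trip_{\timept+1}}(\param_{\rounded,\timept}))$, and chaining these two inequalities completes the step.

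If the $(\timept{+}1)$-st service is a maintenance, the situation is easier because the resulting parameters do not depend on $\param_{\orig,\timept}$ or $\param_{\rounded,\timept}$: by construction of the \graphname\ we have $\param_{\orig,\timept+1}=\param_{\maintenance}$ and $\param_{\rounded,\timept+1}=\floorto{\param_{\maintenance}}{\discr}$, so Corollary~\ref{cor:applyRoundingProb} applied directly to $\param_{\maintenance}$ gives $\failureprob(\param_{\rounded,\timept+1})\leq\failureprob(\param_{\orig,\timept+1})$ and the induction continues (maintenance in fact resets any accumulated slack).

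The main obstacle I anticipate is a subtle one: Corollary~\ref{cor:applyRoundingProb} is only stated for a parameter that lies in some $\domain_j$ on which $\failureprob$ has a consistent directional monotonicity, and we are applying it to $\degradation{\trip_{\timept+1}}(\param_{\rounded,\timept})$. This is exactly where the assumption that $\discr$ is \emph{suitable} — i.e., that the additional boundary points between the $\domain_j$ are included — and the ordering $\failureprob(\param)\leq\failureprob(\otherparam)$ for $\param\in\domain_j$, $\otherparam\in\domain_{j+1}$ are used: they ensure that rounding cannot push a parameter across a boundary into a region with larger $\failureprob$-values, so that within each $\domain_j$ the cone $\emittingcone{\param}{-\directions_j}$ meets $\discr$ in an element whose failure probability is no larger than that of~$\param$. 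With this observation in hand, the induction carries through and the claim follows for all $\timept\in\{0,\dots,q\}$.
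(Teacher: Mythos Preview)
Your proof is correct and follows essentially the same route as the paper: induction on $\timept$, with the base case handled by Corollary~\ref{cor:applyRoundingProb} and the inductive step obtained by first invoking alignment (Definition~\ref{def:aligned}) on the induction hypothesis and then applying Corollary~\ref{cor:applyRoundingProb} to the rounded value. Your explicit treatment of the maintenance case and your remark on why suitability of $\discr$ is needed are welcome additions that the paper leaves implicit.
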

\newpage
\begin{proof}
    Apply induction on $\timept$:\
    \begin{description}
        \item[BC:] When $\timept=0$, we have by Corollary~\ref{cor:applyRoundingProb}:\
        \begin{equation*}
            \failureprob(\param_{\rounded,0})=\failureprob(\floorto{\param_{\orig,0}}{\discr})\leq\failureprob(\param_{\orig,0}).
        \end{equation*}
        \item[IH:] Assume $\failureprob(\param_{\rounded,l})\leq\failureprob(\param_{\orig,l})$ holds for all $l\in\{0,\dots,\timept\}$.
        \item[IS:] Consider $\timept+1$, then it follows:\

        By Corollary~\ref{cor:applyRoundingProb} we have
        \begin{equation*}
            \failureprob(\param_{\rounded,\timept+1})=\failureprob(\floorto{\degradation{\timept+1}(\param_{\rounded,\timept})}{\discr})\leq\failureprob(\degradation{\timept+1}(\param_{\rounded,\timept})).
        \end{equation*}
        In addition, using the induction hypothesis $\failureprob(\param_{\rounded,\timept})\leq\failureprob(\param_{\orig,\timept})$ and the assumption that $\degradation{\timept+1}$ is aligned with $\failureprob$, we obtain
        \begin{equation*}
            \failureprob(\degradation{\timept+1}(\param_{\rounded,\timept}))\leq\failureprob(\degradation{\timept+1}(\param_{\orig,\timept})).
        \end{equation*}
        Thus, we conclude
        \begin{align*}
            \failureprob(\param_{\rounded,\timept+1})
            &=\failureprob(\floorto{\degradation{\timept+1}(\param_{\rounded,\timept})}{\discr})\\
            &\leq\failureprob(\degradation{\timept+1}(\param_{\rounded,\timept}))\\
            &\leq\failureprob(\degradation{\timept+1}(\param_{\orig,\timept}))\\
            &=\failureprob(\param_{\orig,\timept+1})\qedhere
        \end{align*}
    \end{description}
\end{proof}

\subsection{An Iterative Refinement Approach}
Lemma~\ref{lem:applyRounding} and Theorem~\ref{thm:underestimateProb} guarantee that the parameter values in the \graphname\ are always underestimated when $\floorto{\cdot}{\discr}$ is employed during its construction. This leads to an approximation where the wear applied to the vehicles is less severe than in the \completegraphname\ and thus in the original \problemabbrv\ instance. Due to the underestimated parameters, the failure probability is underestimated as well and the resulting solution for (\hyperref[eq:objective]{AP}) has lower costs than the corresponding solution for the original instance.
By Theorem~\ref{thm:pathExistence}, there is a discrete counterpart in the \graphname\ for each path in the \completegraphname\ if the \graphname\ is constructed based a suitable discretization. Since these paths represent all feasible vehicle rotations, the value of an optimal solution of the approximate problem is a lower bound to the value of an optimal solution of the original instance.

Next, we show that adding values to a suitable discretization can only increase the objective value of a solution for the approximate problem. Thus, the subsequent refinement of the discretization leads to a sequence of solutions whose approximate objective values continue to increase and approach the optimal value of a solution for \problemabbrv\ from below.
For this purpose, we use the notion of \emph{corresponding paths}, similar to the discrete counterpart of a path, as above:\ Suppose we are given two \graphname s $\graph_1$ and $\graph_2$ that are constructed based on suitable discretizations $\discr_1$ and $\discr_2$, respectively, and that approximate the same problem. Furthermore, let $\rounded_1$ be a path in $\graph_1$, then we denote a path in $\graph_2$ that consists of the same sequence of services as the corresponding path to $\rounded_1$. Again, such a path exists by Theorem~\ref{thm:pathExistence}.

\begin{theorem}
    \label{thm:increasingPathCosts}
    Consider a suitable discretization $\discr_1$ of $\paramspace$ w.r.t.\ $\failureprob$ and set $\discr_2\coloneqq\discr_1\cup\{\param\}$ for some $\param\in\paramspace$. Let $\graph_1$ and $\graph_2$ be the \graphname s based on $\discr_1$ and $\discr_2$, respectively, and let $\rounded_1$ be a path in $\graph_1$ with $\rounded_2$ being the corresponding path in $\graph_2$.
    Then, it holds $\cost(\rounded_1)\leq\cost(\rounded_2)$.
\end{theorem}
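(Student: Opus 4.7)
The plan is to reduce the comparison of path costs to a stepwise comparison of failure probabilities along the two paths and then to propagate the inequality by induction, combining a refinement lemma on the rounding function with the alignment of the degradation functions.

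First I would split $\cost(\rounded_i)$ into the sum of its arc costs. Since $\rounded_1$ and $\rounded_2$ are corresponding paths, they traverse the same sequence of services and therefore visit the same locations at the same times and cover the same distances, so the waiting, deadhead, maintenance, and artificial arc costs contribute identically to both paths. The only discrepancy sits in the trip arcs, where the cost is an inherent trip contribution (identical in both paths) plus an expected failure contribution $c_F \cdot \failureprob(\param_{\rounded, k})$ at the head parameter. This reduces the theorem to showing $\failureprob(\param_{\rounded_1, k}) \leq \failureprob(\param_{\rounded_2, k})$ for every trip index $k$ along the path.

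Next I would prove the stepwise inequality by induction on $k$. The central building block is a refinement lemma in the spirit of Lemma~\ref{lem:applyRounding}: for every $\eta \in \domain_c \subseteq \paramspace$ on which $\failureprob$ is directional monotonically increasing with respect to $\directions_c = \{\varsi{c,1}\sbv{1},\dots,\varsi{c,n}\sbv{n}\}$, it holds that $\floorto{\eta}{\discr_1} \sgnleq{\vars_c} \floorto{\eta}{\discr_2}$. Combined with the directional monotonicity of $\failureprob$, this delivers $\failureprob(\floorto{\eta}{\discr_1}) \leq \failureprob(\floorto{\eta}{\discr_2})$. The base case applies the refinement lemma to the shared initial parameter $\param_{\vehicle, 0}$. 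For the inductive step, the alignment of $\degradation{\trip_{k+1}}$ with $\failureprob$ (Definition~\ref{def:aligned}) lifts $\failureprob(\param_{\rounded_1,k}) \leq \failureprob(\param_{\rounded_2,k})$ to $\failureprob(\degradation{\trip_{k+1}}(\param_{\rounded_1,k})) \leq \failureprob(\degradation{\trip_{k+1}}(\param_{\rounded_2,k}))$, and the refinement lemma then handles the subsequent rounding in the respective discretizations; maintenance arcs, which reset to the common target $\param_\maintenance$, are covered by applying the refinement lemma directly to $\param_\maintenance$ and continuing the induction from there.

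The hard part will be establishing the refinement lemma. The suitability of $\discr_1$ provides the grid structure used in the proof of Lemma~\ref{lem:nCubeCorners}, by which every $\eta$ lies in a cube $I$ whose vertices lie in $\discr_1$ and for which $\floorto{\eta}{\discr_1}$ picks out the $\vars_c$-extremal vertex of $I$ inside the cone $\emittingcone{\eta}{-\directions_c}$. Because $\discr_2 = \discr_1 \cup \{\param\}$, the argmin defining $\floorto{\eta}{\discr_2}$ ranges over a superset of the $\discr_1$ candidates and either returns the same minimizer or replaces it by $\param$; in the replacement case one must argue that $\param$ lies $\vars_c$-above the previous minimizer, because both lie in the cone and $\param$ is the closer of the two to $\eta$. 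The additional boundary points built into a suitable discretization are crucial to keep the cones $\emittingcone{\eta}{-\directions_c}$ from crossing the boundaries between distinct sets $\domain_j$, thereby enabling the application of the directional monotonicity of $\failureprob$ to the chord connecting $\floorto{\eta}{\discr_1}$ and $\floorto{\eta}{\discr_2}$ without leaving $\domain_c$.
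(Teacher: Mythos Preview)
Your overall plan mirrors the paper's proof closely: both reduce the cost comparison to a stepwise comparison of the failure probabilities at the head parameters and then argue inductively along the common service sequence, with the key step being that the finer discretization $\discr_2$ can only push the rounded parameter $\sgnleq{\vars_c}$-upward (your ``refinement lemma'' is exactly the paper's assertion $\floorto{\param_r}{\discr_1}\sgnleq{\vars}\floorto{\param_r}{\discr_2}$). You are in fact more explicit than the paper in two places: you isolate the refinement lemma as a separate claim with a proof sketch, and you invoke the alignment of the degradation functions (Definition~\ref{def:aligned}) to propagate the inequality through $\degradation{\trip_{k+1}}$, whereas the paper's ``the same arguments apply to the subsequent arcs'' leaves this implicit.

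There is, however, a gap in your inductive step that your refinement lemma does not close. After the first divergence the two paths carry \emph{different} pre-rounding values $\eta_1=\degradation{\trip_{k+1}}(\param_{\rounded_1,k})$ and $\eta_2=\degradation{\trip_{k+1}}(\param_{\rounded_2,k})$; alignment gives you $\failureprob(\eta_1)\leq\failureprob(\eta_2)$, but you then need $\failureprob(\floorto{\eta_1}{\discr_1})\leq\failureprob(\floorto{\eta_2}{\discr_2})$. Your refinement lemma only compares $\floorto{\eta}{\discr_1}$ with $\floorto{\eta}{\discr_2}$ for the \emph{same} $\eta$, so it does not bridge this; nor does Corollary~\ref{cor:applyRoundingProb}, since $\failureprob(\floorto{\eta_2}{\discr_2})\leq\failureprob(\eta_2)$ points the wrong way. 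To make the induction go through you would need an additional order-preservation statement, e.g., that $\failureprob(\eta_1)\leq\failureprob(\eta_2)$ implies $\failureprob(\floorto{\eta_1}{\discr})\leq\failureprob(\floorto{\eta_2}{\discr})$ within a fixed suitable $\discr$, or else strengthen the induction hypothesis to the $\sgnleq{\vars}$-order on parameters and assume the degradation functions preserve that order (which is stronger than alignment). The paper's proof, which works at the $\sgnleq{\vars}$ level and then says ``the same arguments apply'', glosses over precisely this point as well; your proposal is not wrong relative to the paper, but you should be aware that the chain of inequalities does not close on the ingredients you have listed.
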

\begin{proof}
    If $\param\in\discr_1$, we have $\discr_1=\discr_2$ and no path would get altered. Consequently, we have $\cost(\rounded_1)=\cost(\rounded_2)$.
    We therefore assume $\param\notin\discr_1$ in the following.
    Recall that we assume $\paramspace$ to be a bounded cuboid and that each discretization induces a partition of $\paramspace$ into cuboids, one of which contains $\param$. Let this cell be $\cube{c}$.
    
    Now, let $\arcs_c$ be the set of all arcs in $\graph_1$ that have a head node whose parameters are equal to a vertex of $\cube{c}$. Then, we obtain two cases:\
    First, if $\rounded_1$ does not contain an arc of $\arcs_c$, all of its arcs are also present in the \graphname\ w.r.t.\ $\discr_2$ and are not modified. It follows that the parameters of the nodes in $\rounded_2$ are equal to those in $\rounded_1$, resulting in $\cost(\rounded_1)=\cost(\rounded_2)$.
    Second, if $\rounded_1$ contains an arc of $\arcs_c$, let $\arc_c$ be the first of them. Then, each arc preceding $\arc_c$ remains unchanged. Let $\node_c$ be the tail node of $\arc_c$ and $\trip\in\trips$ the trip corresponding to $\arc_c$, then we determine $\param_r\coloneqq\degradation{\trip}(\param_c)$, where $\param_c$ is the parameter value of $\node_c$.
    Now, there are again two possibilities:\ If $\floorto{\param_r}{\discr_2}=\floorto{\param_r}{\discr_1}$, the failure costs associated with $\trip$ in $\graph_2$ remain the same and we consider the next arc of $\rounded_1$. But if $\floorto{\param_r}{\discr_1}\sgnleq{\vars}\floorto{\param_r}{\discr_2}=\param$, where $\vars$ is the vector of signs that determines in which directions $\failureprob$ is directional monotonically increasing on the domain containing $\param_r$, we get $\failureprob(\floorto{\param_r}{\discr_1})\leq\failureprob(\floorto{\param_r}{\discr_2})$. The same arguments apply to the subsequent arcs of $\arc_c$ in $\rounded_2$. Therefore, the parameter values of the nodes in $\rounded_2$ may be updated, but they can only increase compared to the values of the corresponding nodes in $\rounded_1$. Consequently, the failure probability can only increase, as can the costs. This results in $\cost(\rounded_1)\leq \cost(\rounded_2)$.
\end{proof}

\begin{corollary}
    \label{cor:increasingSolCosts}
    Consider a suitable discretization $\discr_1$ of $\paramspace$ w.r.t.\ $\failureprob$ and set $\discr_2\coloneqq\discr_1\cup\{\param\}$ for some $\param\in\paramspace$. Let $\graph_1$ and $\graph_2$ be the \graphname s based on $\discr_1$ and $\discr_2$, respectively.
    Then, it holds $\cost(\varx_1^*)\leq\cost(\varx_2^*)$, where $\varx_i^*$ is an optimal solution for the approximate problem induced by $\graph_i$ for $i\in\{1,2\}$.
\end{corollary}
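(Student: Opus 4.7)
The plan is to deduce the corollary from Theorem~\ref{thm:increasingPathCosts} by a flow decomposition argument. Let $\varx_2^*$ be an optimal integer solution of the approximate problem on $\graph_2$. By standard flow decomposition, I would write the associated flow as a collection of unit path flows $\rounded_2^1,\dots,\rounded_2^m$, each going from an artificial start node to an artificial end node. This is possible because constraints~\eqref{eq:flow_conservation} and~\eqref{eq:balancedness} hold and the artificial start arcs are binary by~\eqref{eq:start_arc_variable}, so every $\rounded_2^j$ corresponds to the rotation of exactly one vehicle.

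For each $\rounded_2^j$ I would then construct the corresponding path $\rounded_1^j$ in $\graph_1$ by replaying the same sequence of services (initial vehicle, trips, waits, deadheads, and maintenance services). Existence of $\rounded_1^j$ is guaranteed by Theorem~\ref{thm:pathExistence}, since $\graph_1$ is built from a suitable discretization. Applying Theorem~\ref{thm:increasingPathCosts} path-by-path—formally by chaining the statement along the auxiliary sequence of discretizations obtained by adding the single point $\param$ to $\discr_1$—yields $\cost(\rounded_1^j)\leq\cost(\rounded_2^j)$ for every $j$.

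Next, I would verify that the aggregated paths induce a feasible vector $\varx_1$ for the approximate problem on $\graph_1$. The coverage constraints~\eqref{eq:coverage} are preserved because the same trips are operated with the same multiplicities in both graphs; the flow conservation constraints~\eqref{eq:flow_conservation} hold since each $\rounded_1^j$ is a path; and the balancedness constraints~\eqref{eq:balancedness} carry over because the start and end locations of $\rounded_1^j$ coincide with those of $\rounded_2^j$ (they are determined by the sequence of services, not by the rounded parameter values). The binary restriction~\eqref{eq:start_arc_variable} on artificial start arcs is satisfied because each vehicle contributes to at most one $\rounded_2^j$ and thus to at most one $\rounded_1^j$. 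Combining these observations gives
\begin{equation*}
\cost(\varx_1^*)\leq\cost(\varx_1)=\sum_{j=1}^{m}\cost(\rounded_1^j)\leq\sum_{j=1}^{m}\cost(\rounded_2^j)=\cost(\varx_2^*),
\end{equation*}
which is the claim.

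The main obstacle I anticipate is the bookkeeping required to match artificial start and end arcs between $\graph_1$ and $\graph_2$, since those arcs are formally distinct in the two graphs (their head/tail parameter values differ due to the different rounding). One must argue that the artificial start arc associated with a given vehicle always exists in both graphs—which follows from the construction in Section~\ref{sec:graph}—and similarly that the artificial end arcs at the respective terminal locations are available. Once this is in place, the rest of the argument is routine.
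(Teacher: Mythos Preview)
Your proposal is correct and follows essentially the same route as the paper: decompose $\varx_2^*$ into paths, transfer each to its corresponding path in $\graph_1$ via Theorem~\ref{thm:pathExistence}, apply Theorem~\ref{thm:increasingPathCosts} pathwise, and observe that the transferred paths form a feasible solution on $\graph_1$ whose cost dominates $\cost(\varx_1^*)$. Your explicit feasibility check and the remark on matching artificial start/end arcs are more detailed than the paper's treatment, but the underlying argument is identical; note also that since $\discr_2=\discr_1\cup\{\param\}$ already differs by a single point, no chaining of intermediate discretizations is needed to invoke Theorem~\ref{thm:increasingPathCosts}.
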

\begin{proof}
    Suppose we have an optimal solution $\varx_2^*$ for the approximate problem induced by $\graph_2$. $\varx_2^*$ consists of a set of paths $\{\rounded_{2,1},\dots,\rounded_{2,m}\}$ that also exist in $\graph_1$ according to Theorem~\ref{thm:pathExistence}. Let these paths be $\rounded_{1,1},\dots,\rounded_{1,m}$.
    By Theorem~\ref{thm:increasingPathCosts}, we have $\cost(\rounded_{1,i})\leq\cost(\rounded_{2,i})$ for all $i\in\setft{1}{m}$. These paths yield a solution $\varx_1$ for the approximate problem induced by $\graph_1$ with $\cost(\varx_1)\leq\cost(\varx_2^*)$. Since we have $\cost(\varx_1^*)\leq\cost(\varx_1)$, the claim follows.
\end{proof}

Theorem~\ref{thm:increasingPathCosts} and Corollary~\ref{cor:increasingSolCosts} give rise to an iterative solution approach:\
Start with a suitable discretization $\discr$ of $\paramspace$ w.r.t.\ $\failureprob$, then construct the corresponding \graphname, solve the induced approximate problem (\hyperref[eq:objective]{AP}) and iteratively refine $\discr$. This yields a sequence of solutions whose objective values approach the value of an optimal solution for \problemabbrv\ from below.
Such a sequence of discretizations is given by the family $\{\discr_i\}_{i\in\pos{\integer}}$ described at the beginning of this section.
Furthermore, the approximate solutions can be transformed to solutions for the original problem. For this purpose, we consider the service sequences given by the paths contained in the solution and propagate the parameters through the exact degradation functions.
The resulting procedure is described in Algorithm~\ref{algo:iterativeRefinement} and leads to a dual solution approach for \problemabbrv.
However, if one is only interested in a lower bound for the instance, the algorithm can be modified by solving the LP relaxation of (AP) in line 6 and removing lines 1, 7, and 9.

\RestyleAlgo{ruled}
\DontPrintSemicolon
\begin{algorithm}
    \caption{Iterative Refinement Approach for \problemabbrv}
    \label{algo:iterativeRefinement}
    \KwData{\problemabbrv\ instance $\mathcal{I}$}
    \KwResult{Solution to \problemabbrv}
    $ub\gets\infty$\;
    $lb\gets -\infty$\;
    \While{$\text{time limit not reached}$ \upshape\textbf{and} $ lb < ub$}{
        $\discr\gets\discr_i$ as described in Section~\ref{sec:approximating}\;
        $\graph\gets$ \graphname\ of $\mathcal{I}$ w.r.t.\ $\floorto{\cdot}{\discr}$\;
        $c\gets$ solution of (\hyperref[eq:objective]{AP}) for $\graph$\;
        $x\gets$ solution corresponding to $c$ w.r.t.\ original degradation functions\;
        $lb\gets v(c)$\;
        $ub\gets v(x)$\;
    }
    return $x$\;
\end{algorithm}

\section{Computational Results}
\label{sec:results}
In this section, the results of Algorithm~\ref{algo:iterativeRefinement} for the test instances provided by \cite{prause2023construction} are presented and discussed.
The scenarios are based on real timetables of a private German railroad company and have a time horizon of one week. The health states of the vehicles represent the conditions of their doors and are assumed to be normally distributed, i.e., failure probability \eqref{eq:failureProbNormal} has to be utilized. In addition, the occurring degradation behavior is non-linear.

\subsection{Computational Setup}
The computations were conducted on a machine with Intel(R) Xeon(R) Gold 6342 @ 2.80GHz CPUs, eight cores, and 64GB of RAM.
The algorithms were implemented in Julia v1.9.4~\cite{julia} and Gurobi v10.0.2~\cite{gurobi} was used to solve the ILP and LP formulations.

\subsection{Results}
The results obtained for each of the instances are specified in Table~\ref{tab:results}.
The first four columns contain the characteristics of the instances, while column five to eight hold the obtained solution values and lower bounds. Here, MSH is the multi-swap heuristic presented in \cite{prause2023multi}, DA is the dual solution approach given in Algorithm~\ref{algo:iterativeRefinement}, and LP-LB is the lower bound derived from this algorithm by utilizing the LP relaxation of (AP). The best solution and the best lower bound are each marked in bold. The last column then contains the gap in percent between the best obtained solution and the best lower bound.

First, we describe the results obtained by MSH and DA regarding solutions to the test instances.
Here, the heuristic approach was able to achieve the best result for five of the six instances, and its solution for instance T4, where the dual approach obtained the best result, has a gap of only 0.01\% and is therefore almost equally good.
DA on the other hand, could only find the best solution for one instance, i.e., T4, and was not able to generate a primal feasible result for instance T6 at all. Furthermore, the generated solutions have a gap of 4.2 -- 70.5\% to the heuristic results for the instances where the heuristic achieved the best results.
Thus, MSH is a better choice when it comes to generating feasible solutions, since the heuristic consistently delivers better or almost equally good results as the dual approach.
This behavior was expectable as MSH makes the greatest progress within the first 400 seconds~\cite{prause2023multi}, in which DA is still at a point where the discretization is rather coarse leading to an approximation with less severe degradation. Hence, the solutions obtained by the dual approach tend to ignore maintenance at that point in time. In addition, DA relies on repeatedly solving ILP formulation (AP), whose size increases with each iteration.

Next, the results regarding the obtained lower bounds are discussed.
Here, LP-LB and DA achieved the same result for three of the six instances, i.e., T2, T5 and T6. LP-LB performs best on two of the instances (T1 and T3), where the results obtained by DA deviate from the LP-based lower bounds by 0.02\% and 1.5\%, respectively. In one case (T4), DA achieved the best lower bound, while the result of LP-LB is 0.5\% away from this bound.
These similarly good results are obtained because the LP relaxation is used as a lower bound in the solution process of the ILP within DA.
This leads us to two conclusions:\
On the one hand, this shows that the LP relaxation of (AP) is very tight, as the lower bound in the ILP formulations does not seem to benefit from the integrality constraints of its variables. On the other hand, LP-LB does not appear to benefit from the fact that it does not have to solve the ILP and thus can perform more iterations, allowing the consideration of finer discretizations at an earlier stage.
It therefore makes no difference whether DA or LP-LB is used for determining lower bounds.

In summary, a combination of MSH and LP-LB appears to achieve the best results. With this combination it is possible to generate solutions and lower bounds with gaps of less than 5\% for test instances originating from genuine timetables.
Although the dual solution approach was not able to obtain similarly good solutions as MSH, it performed well in generating lower bounds for \problemabbrv\ instances. Moreover, it was able to solve one instance, namely T5, to optimality.

\begin{table}
    \centering
    \caption{Characteristics and results for the test instances.}
    \label{tab:results}
    \resizebox{\linewidth}{!}{
    \begin{tabular}{ccccrrrrc}
        \toprule
        \multirow{2}{*}{Instance} & \multirow{2}{*}{Trips} & \multirow{2}{*}{Destinations} & \multirow{2}{*}{Vehicles} & \multicolumn{2}{c}{Solution Value} & \multicolumn{2}{c}{\quad Lower Bound} & \multirow{2}{*}{Best Gap in \%} \\
        &&&& \multicolumn{1}{c}{MSH} & \multicolumn{1}{c}{DA} & \multicolumn{1}{c}{DA} & \multicolumn{1}{c}{LP-LB} \\
        \midrule
        T1  & 566   & 8     & 6     & \best{269,728}    & 480,059           & 261,384           & \best{261,432}    & 3.08 \\
        T2  & 608   & 10    & 7     & \best{433,328}    & 452,201           & \best{428,349}    & \best{428,349}    & 1.15 \\
        T3  & 636   & 15    & 16    & \best{1,419,687}  & 4,817,547         & 1,360,954         & \best{1,381,725}  & 2.67 \\
        T4  & 679   & 9     & 8     & \best{196,411}    & 277,687           & \best{190,537}    & 189,577           & 2.99 \\
        T5  & 813   & 16    & 14    & 327,805           & \best{327,770}    & \best{327,770}    & \best{327,770}    & 0.00 \\
        T6  & 919   & 17    & 29    & \best{2,337,466}  & -                 & \best{2,290,596}  & \best{2,290,596}  & 2.01 \\
        \bottomrule
    \end{tabular}}
\end{table}

\section{Conclusion}
\label{sec:conclusion}
In this article, we have introduced the notions necessary to consistently underestimate the parameters during the construction of the \graphname\ for approximating the \problemabbrv\ when the considered health states are distributed by families of PDFs with more than one parameter.
For this purpose, we have constructed a rounding function that takes the domains into account on which the applied failure probability function is monotonically increasing w.r.t.\ different sets of directions.
Furthermore, we described a family of discretizations with increasing granularity that results in \graphname s with rising approximation quality, and presented different approaches to model the degradation based on the family of normal, Weibull, and gamma distributions.
We then estimated the error between vehicle rotations, i.e., paths, in the \graphname\ and \completegraphname, and it was proved that the objective value of a solution for the approximate problem can only underestimate the value of an optimal solution for \problemabbrv.
Hence, we derived a dual solution approach and a method to determine lower bounds for the discussed problem.
The final computations show the effectiveness of the lower bound, as the gap to heuristic results for real-world instances are rather small.

Possible next steps for future research are the implementation of a column generation approach for solving the LP relaxation of (AP) while computing the lower bound. This could lead to an acceleration of the solution process and might offer the possibility to solve the approximate problem for even finer discretizations than with the direct LP formulation.
In addition, this method could be employed to develop a branch and price approach for \problemabbrv. This could improve the efficiency of the dual approach and thus deliver better results.

\section*{Acknowledgements}
This work was supported by the innovation funding program ProFIT (grant no. 10174564) funded by the State of Berlin and co-funded by the European Union.

\bibliographystyle{plain}
\bibliography{references}

\end{document}